\documentclass[10pt]{article}
\usepackage{amsfonts}
\usepackage{dsfont}
\usepackage{amsmath,mathrsfs}
\usepackage{amsthm}
\usepackage{amssymb}
\usepackage{amscd}
\usepackage{graphicx}
\usepackage{float}
\usepackage{subfigure}
\usepackage{epstopdf}
\usepackage{appendix}
\usepackage{booktabs}
\usepackage{units}
\usepackage{enumerate}
\usepackage{mathrsfs}
\usepackage{longtable}
\allowdisplaybreaks[1]
\textheight 9.4in
 \textwidth 6.5in
 \topmargin -15mm
 \oddsidemargin 2mm
 \evensidemargin 2mm
 \parskip 1mm
 \parindent 2em
      \setcounter{page}{1}
      
\newtheorem{definition}{Definition}[section]
\newtheorem{theorem}{Theorem}[section]

\newtheorem{lemma}{Lemma}[section]
\newtheorem{proposition}{Proposition}[section]
\newtheorem{remark}{Remark}[section]
\newtheorem{example}{Example}[section]
\newtheorem{assumption}{Assumption}[section]

\newtheorem*{problem(P1)}{Problem (P1)}
\newtheorem*{problem(P2)}{Problem (P2)}
\newtheorem*{problem(P1a)}{Problem (P1a)}
\newtheorem*{problem(P1b)}{Problem (P1b)}
\newtheorem*{problem(P2a)}{Problem (P2a)}

\begin{document}
\title{{Social Optima in Linear Quadratic Graphon Field Control: Analysis via Infinite Dimensional Approach}\thanks{De-xuan Xu, Zhun Gou, and Nan-jing Huang: The work of these authors was supported by the National Natural Science Foundation of China (12171339).}}
\author{{De-xuan Xu$^a$, Zhun Gou$^b$ and Nan-jing Huang$^a$\footnote{Corresponding author,  E-mail: nanjinghuang@hotmail.com; njhuang@scu.edu.cn} }\\
{\scriptsize\it a. Department of Mathematics, Sichuan University, Chengdu,
Sichuan 610064, P.R. China}\\
{\scriptsize\it b. College of Mathematics and Statistics, Chongqing Technology and Business University, Chongqing 400067, P.R. China}}
\date{}
\maketitle
\vspace*{-9mm}
\begin{center}
\begin{minipage}{5.8in}
{\bf Abstract.}
This paper is concerned with linear quadratic graphon field social control problem where the noises of individual agents are correlated. Compared with the well-studied mean field system, the graphon field system consists of a large number of agents coupled weakly via a weighted undirected graph where each node represents an individual agent. Another notable feature of this paper is that the dynamics of states of agents are driven by Brownian motions with a correlation matrix. The infinite dimensional approach is adopted to design the centralized and decentralized controls for our large population system. By graphon theory, we prove that the linear quadratic (LQ) social optimum control problem under the centralized information pattern is equivalent to an LQ optimal control problem concerned with a stochastic evolution equation, and the feedback-type optimal centralized control is obtained. Then, by designing an auxiliary infinite dimensional optimal control problem through agent number $N\rightarrow\infty$, a set of decentralized strategies are constructed, which are further shown to be asymptotically social optimal.
\\ \ \\
{\bf Keywords:} Graphon field systems; linear quadratic stochastic optimal control; Q-wiener process; operator-valued Riccati equations; asymptotically social optimal.
\\ \ \\
{\bf 2020 Mathematics Subject Classification:} 91A15; 49N80; 05C57
\\ \ \\
\end{minipage}
\end{center}
\section{Introduction}
\qquad Strategic decision problems of large population and complex systems arise naturally with various applications such as financial markets, social networks, exhaustible resource production, network security. Classical models considering mean field systems with homogenous interaction, dates back to works of Boltzmann, Vlasov, McKean and others (see, for example, \cite{Kolokoltsov,McKean,Sznitman} and the references therein). The noncooperative game problems based on mean field systems, named as mean field games (MFG), were introduced by the parallel works of Huang, Caines, and Malham\'{e} in \cite{Huang,Huang1} and Lasry and Lions in \cite{Lions}, and have been extensively studied in recent decades (see for instance \cite{Carmona,Carmona1,Carmona2,Carmona4,Bensoussan,Bensoussan3} and the references therein). Apart from MFG, cooperative multi-agent decision problems (social optima) in mean field models have drawn more research interests due to their theoretical implications and real application potentials, see e.g. \cite{Huang2,Wang1,Jianhui,Wang2,Feng}, and references therein.

To consider large systems of agents whose interactions are not necessarily symmetric and possibly heterogeneous, it is natural to introduce graphs to describe how each agent is connected to the others. The theory of graphons (see, e.g., \cite{Janson,Lovasz}) provides a framework for the study of very-large systems of agents on large graphs, and has been used in, for example, \cite{Gao5,Carmona3,Aurell,Parise,Caines,Gao,Amini,Dexuan} to study large population games with heterogeneous interactions, which is called graphon mean field games (GMFG) or graphon field games (GFG). In addition, there exist some works concerning optimal control problems or social optima problems on large-scale graphs. For example, Gao and Caines \cite{Gao1} obtained the approximate optimal control of complex network systems in deterministic environment, by applying graphon theory and the theory of infinite dimensional systems. Crescenzo et al. \cite{Crescenzo} considered a family of controlled Brownian diffusion processes depending on the whole collection of marginal probability laws, and studied the optimal control of graphon mean field systems with heterogeneous interactions. In contrast to the assumption of mutual independence of Brownian motions required in \cite{Crescenzo}, Dunyak and Caines proposed a novel graphon mean field system in \cite{Dunyak1}, which allowed the Brownian motions in the dynamics to be correlated. Based on the models in \cite{Dunyak1}, Dunyak and Caines \cite{Dunyak2,Dunyak3} utilised graphon theory and Q-noise to study the linear quadratic optimal control problems on large graph, and shown that the optimal control problems on graphon systems are the limit of the corresponding finite graph optimal control problems. Recently, some very interesting works \cite{Liang,Liang1} by Liang et al. studied social optimal problems on graphs, where each node on the network represents a population. They obtained the centralized optimal control and derived a set of asymptotically optimal distributed controls when the number of population at each node is large. It is noteworthy that, for the finite-player game problems considered in their works, the graphs are not large, which differ from all the aforementioned references.

To our best knowledge, there is no work to study large population social optima problems with large-graph structures in continuous time. The present paper is thus devoted to the study the finite large population linear quadratic social optima problems in continuous time, where the states of all agents on large-scale graphs are governed by the dynamic systems which are driven by Brownian motions with a correlation matrix. We would like to mention that it makes sense to consider correlated noises in large population systems. For instance, some literature studied the situation where agents' states are affected by one or several common noises, as mentioned in \cite{Wang2,Feng1,zhenwu}.


To date, most previous works on large population social control adopted the fixed-point method (see, e.g., \cite{Huang2,Li,Salhab1}) and direct method (see, e.g., \cite{Wang2,Liang,Wang3}). However, due to the presence of large graph structure and complex correlations of noises, the aforementioned two traditional methods do not apply here in a straightforward manner. In the present paper, by employing the infinite dimensional stochastic analysis approach, we obtain the centralized and decentralized controls of the social optima problems. In particular, we first transform the $N$ correlated Brownian motions into $Q$-Brownian motions, and show that the centralized control of $N$-agent social control problem is equivalent to the solution of a stochastic optimal control problem in infinite dimension by deriving decomposed form of solution of the corresponding operator-valued Riccati equation. Besides, we construct and solve an auxiliary infinite dimensional control problem when size of the graph go to infinity, and then design a set of decentralized strategies from the solution of the auxiliary control problem. Finally, we show the asymptotic optimality of the decentralized strategies.

The main contributions of this work can be summarized from three different viewpoints. Firstly, we obtain the centralized optimal control and its feedback representation for the social optima problem by infinite dimensional approach. Secondly, we give a set of decentralized asymptotically optimal strategies for the social optima problem in a privacy-preserving pattern. Finally, different from the traditional methods mentioned above, we adopt a new approach via infinite-dimensional stochastic analysis to solve the large population social optima problems.

The rest of the paper is organized as follows. In the next section we recall the background for graphons and some basic definitions in infinite dimensional stochastic calculus. After that Section 3 introduces the social optima problem with finite agents. Section 4 obtains the feedback type optimal centralized control of the social optima problem with finite agents. Section 5 is devoted to design the decentralized strategies and study the asymptotic optimality of the decentralized strategies. In Section 6, we investigate the finite rank graphon case, before we summarize the results in Section 7.


\hspace*{\fill}\\
\noindent
\emph{Notation}: $L^p[0,1]$ denotes the Lebesgue space over $[0,1]$ under the norm defined by $\|\phi\|_p=\left(\int_0^1|\phi(\alpha)|^p d\alpha\right)^{\nicefrac{1}{p}}$. The inner product in $L^2[0,1]$ is defined as follows: for $\phi,\varphi\in L^2[0,1]$, $\langle \phi,\varphi \rangle=\int_0^1\phi(\alpha)\varphi(\alpha)d\alpha$. The function $\mathds{1}\in L^2[0,1]$ is defined as follows: $\mathds{1}(\alpha)\triangleq 1$, for all $\alpha\in[0,1]$. Denote by $I$ the identity operator on Hilbert spaces.

\section{Preliminaries}

\subsection{Graphs, graphons and graphon operators}
\qquad A graph $G=(V,E)$ is denoted by a node set $V=\{1,\ldots,N\}$ and an edge set $E\subset V\times V$. The corresponding adjacency matrix is defined by $M_N=[m_{ij}]$, where $m_{ij}\in [-1,1]$ denotes the weight between nodes $i$ and $j$. A graph is undirected if its edges is unordered, which implies that the corresponding adjacency matrix is symmetric.

In this paper, a graphon is defined as a symmetric measurable function from $[0,1]^2$ to $[-1,1]$. This definition is also adopted in \cite{Dexuan,Gao1,Gao4}. The name of ``graphon'' comes from the contraction of graph-function \cite{Lovasz}. There are also some different definitions for the term ``graphon'' in early literatures, we refer the reader to \cite{Lovasz,Gao5,Borgs2}. Specially, for the $N$-uniform partition $\{P_1,\ldots,P_N\}$ of $[0,1]$: $P_l=[\frac{l-1}{N},\frac{l}{N})$ for $1\leq l\leq N-1$ and $P_N=[\frac{N-1}{N},1]$, the step function type graphon $M^{[N]}$ corresponding to $M_N$ is given by
\begin{equation}\label{stepgraphon}
  M^{[N]}(\alpha, \beta)=\sum\limits_{q=1}^N \sum\limits_{l=1}^N \mathds{1}_{P_q}(\alpha)\mathds{1}_{P_l}(\beta)m_{ql},\qquad \forall (\alpha,\beta)\in [0, 1]^2,
\end{equation}
where $\mathds{1}_{P_q}(\cdot)$ is the indicator function.

A graphon operator $T_M:L^2[0,1]\to L^2[0,1]$ is defined as follows
\begin{equation}\label{graphonoperator}
(T_M\varphi)(\alpha)=\int_{[0,1]}M(\alpha,\beta)\varphi(\beta)d\beta,\quad \forall\varphi\in L^2[0,1],
\end{equation}
where $M(\cdot,\cdot)$ is a graphon. To ease notation, we are inclined to denote graphon operators by $M$ instead of $T_M$ when no confusion occurs. The graphon operator $M$ is self-adjoint and compact \cite{Lovasz}.
\begin{lemma}\label{buchongGaoprop}
Consider a real symmetric matrix $M_N=[m_{ij}]$, $m_{ij}\in[-1,1]$ with nonzero eigenvalues $\{\lambda_l^{M_N}\}_{l=1}^r$ and orthonormal eigenvector $\{v_l^{M_N}\}_{l=1}^r$, then the integral operator $M^{[N]}$ defined by (\ref{stepgraphon})-(\ref{graphonoperator}) has $rank\ M^{[N]}=r$.
\end{lemma}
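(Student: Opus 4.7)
The plan is to lift the spectral decomposition of the finite symmetric matrix $M_N$ to a finite-rank kernel decomposition of the graphon $M^{[N]}$, and then read off the rank of the induced integral operator.

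First, since $M_N=[m_{ij}]$ is real symmetric with $r$ nonzero eigenvalues, the spectral theorem gives
\begin{equation*}
m_{ij}=\sum_{k=1}^{r}\lambda_k^{M_N}\,v_k^{M_N}(i)\,v_k^{M_N}(j),
\end{equation*}
where $\{v_k^{M_N}\}_{k=1}^{r}$ is orthonormal in $\mathbb{R}^N$. Substituting this identity into the definition (\ref{stepgraphon}) and introducing the step-function lifts
\begin{equation*}
\tilde v_k(\alpha)\triangleq \sum_{q=1}^{N} v_k^{M_N}(q)\,\mathds{1}_{P_q}(\alpha),\qquad k=1,\ldots,r,
\end{equation*}
I would rewrite the graphon as the separable-kernel sum
\begin{equation*}
M^{[N]}(\alpha,\beta)=\sum_{k=1}^{r}\lambda_k^{M_N}\,\tilde v_k(\alpha)\,\tilde v_k(\beta).
\end{equation*}
This step just consists of exchanging the order of summation over $(q,l)$ and $k$, so it is routine.

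Next, from the separable form together with (\ref{graphonoperator}) I obtain
\begin{equation*}
(M^{[N]}\varphi)(\alpha)=\sum_{k=1}^{r}\lambda_k^{M_N}\,\langle \tilde v_k,\varphi\rangle\,\tilde v_k(\alpha),\qquad \varphi\in L^2[0,1],
\end{equation*}
which immediately shows $\mathrm{Range}(M^{[N]})\subseteq \mathrm{span}\{\tilde v_1,\ldots,\tilde v_r\}$, giving the upper bound $\mathrm{rank}\,M^{[N]}\le r$.

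For the matching lower bound I would use the orthogonality relation
\begin{equation*}
\langle \tilde v_k,\tilde v_j\rangle=\sum_{q=1}^{N}v_k^{M_N}(q)v_j^{M_N}(q)\int_0^1\mathds{1}_{P_q}(\alpha)\,d\alpha=\tfrac{1}{N}\delta_{kj},
\end{equation*}
which uses only that the partition $\{P_q\}$ is $N$-uniform and the $v_k^{M_N}$ are orthonormal. In particular the $\tilde v_k$ are linearly independent and nonzero, and applying $M^{[N]}$ to $\tilde v_j$ yields $M^{[N]}\tilde v_j=\tfrac{\lambda_j^{M_N}}{N}\tilde v_j$, so each $\tilde v_j$ lies in the range with eigenvalue $\lambda_j^{M_N}/N\ne 0$. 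Hence $\mathrm{rank}\,M^{[N]}\ge r$, and combining both inequalities finishes the proof.

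I do not anticipate a genuine obstacle here; the only point requiring care is verifying that no rank is lost when passing from the orthonormality of $\{v_k^{M_N}\}$ in $\mathbb{R}^N$ to the linear independence of the lifted step functions $\{\tilde v_k\}$ in $L^2[0,1]$, which the factor $1/N$ above handles cleanly.
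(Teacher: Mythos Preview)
Your proof is correct and arguably cleaner than the paper's. Both arguments obtain the lower bound $\operatorname{rank}M^{[N]}\ge r$ the same way, by checking that the lifted step functions $\tilde v_k$ (the paper's $\sqrt{N}S_{v_k^{M_N}}$, up to normalization) are orthogonal eigenfunctions of $M^{[N]}$ with nonzero eigenvalues $\lambda_k^{M_N}/N$; the paper simply cites \cite[Prop.~3]{Gao4} for this computation while you carry it out directly.

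The difference lies in the upper bound. You substitute the spectral decomposition $m_{ij}=\sum_{k=1}^r \lambda_k^{M_N}v_k^{M_N}(i)v_k^{M_N}(j)$ into the step-graphon and obtain the separable kernel $M^{[N]}(\alpha,\beta)=\sum_{k=1}^r\lambda_k^{M_N}\tilde v_k(\alpha)\tilde v_k(\beta)$, from which $\operatorname{Range}(M^{[N]})\subseteq\operatorname{span}\{\tilde v_1,\ldots,\tilde v_r\}$ is immediate. The paper instead argues by contradiction: assuming $\operatorname{rank}M^{[N]}>r$, it picks an eigenfunction $f$ with nonzero eigenvalue orthogonal to all $\sqrt{N}S_{v_l^{M_N}}$, forms the vector $\vec f=(\langle\mathds{1}_{P_1},f\rangle,\ldots,\langle\mathds{1}_{P_N},f\rangle)^T$, shows $M_N\vec f=0$, and then computes $M^{[N]}f=0$ directly from the step structure, contradicting the nonzero eigenvalue. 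Your route is more constructive and makes the finite-rank structure of the operator explicit; the paper's route avoids writing out the spectral decomposition of $M_N$ but is more circuitous.
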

\begin{proof}
By \cite[Prop. 3]{Gao4}, $\{\sqrt{N}S_{v_l^{M_N}}\}_{l=1}^r$ are a set of orthonormal eigenfunctions of operator $M^{[N]}$, and $\{\frac{1}{N}\lambda_l^{M_N}\}_{l=1}^r$ are corresponding nonzero eigenvalues. Suppose $rank\ M^{[N]}>r$. Then we can find a eigenfunction $f\in L^2[0,1]$ corresponding to a nonzero eigenvalue, such that $\langle f,\sqrt{N}S_{v_l^{M_N}}\rangle=0$, $l=1,\cdots,r$, which implies $\sum_{i=1}^N v_l^{M_N}(i)\langle \mathds{1}_{P_i},f\rangle=0$, $l=1,\cdots,r$. Define $\vec{f}=(\langle \mathds{1}_{P_1},f\rangle,\cdots,\langle \mathds{1}_{P_N},f\rangle)^T$. Then we obtain $\langle v_l^{M_N},\vec{f}\rangle=0$, $l=1,\cdots,r$, and so $M_N\vec{f}=0$. Hence
\begin{align*}
  \int_0^1 M^{[N]}(\alpha,\beta)f(\beta)d\beta & =\int_0^1 \Big(\sum_{i=1}^N\sum_{j=1}^N \mathds{1}_{P_i}(\alpha)\mathds{1}_{P_j}(\beta)m_{ij}\Big)f(\beta)d\beta
  =\sum_{i=1}^N \mathds{1}_{P_i}(\alpha)\sum_{j=1}^Nm_{ij}\langle \mathds{1}_{P_j},f\rangle=0.
\end{align*}
Then the corresponding eigenvalue of $f$ is zero, which is a contradiction.
\end{proof}
Combining \cite[Prop. 3]{Gao4} and Lemma \ref{buchongGaoprop}, $\{\frac{1}{N}\lambda_l^{M_N}\}_{l=1}^r$ contains all nonzero eigenvalues of operator $M^{[N]}$.

\subsection{Basic definitions in infinite dimensional stochastic calculus}
\qquad We denote by $(H,\langle\cdot\rangle_H)$ a separable, real Hilbert space. In order to introduce the notation of Q-wiener process, we proceed as follows. Denote by $\mathscr{L}_1(H)$ the space of trace class operators on $H$,
$$
\mathscr{L}_1(H)=\{L\in\mathscr{L}(H):tr\big((LL^*)^{\frac{1}{2}}\big)<\infty\}
$$
where $\mathscr{L}(H)$ is the space of bounded linear operators on $H$, and the trace of the operator $(LL^*)^{\frac{1}{2}}$ is defined by
$$
tr\big((LL^*)^{\frac{1}{2}}\big)=\sum\limits_{j=1}^\infty\langle (LL^*)^{\frac{1}{2}}e_j,e_j\rangle_H
$$
for an orthonormal basis $\{e_j\}_{j=1}^\infty\subset H$. The trace $tr\big((LL^*)^{\frac{1}{2}}\big)$ is independent of the choice of the orthonormal basis.
\begin{definition}[$Q$-Wiener Process\cite{Gawarecki}]\label{Q-Wiener}
Let $(\Omega,\mathcal{F},\mathbb{P})$ be a complete probability space, $Q$ be a positive self-adjoint trace class operator on a separable Hilbert space $H$, $\{\varphi_i\}_{i=1}^\infty$ be an orthonormal basis in $H$ diagonalizing $Q$, and $\{\lambda_i\}_{i=1}^\infty$ be the corresponding eigenvalues, i.e. $Q\varphi_i=\lambda_i\varphi_i$ for $i=1,2,\cdots.$ Let $\{w_i\}_{i=1}^\infty$ be a sequence of real valued Brownian motions mutually independent on $(\Omega,\mathcal{F},\mathbb{P})$. The process
$$
W_t^Q=\sum\limits_{i=1}^\infty\sqrt{\lambda_i}w_i(t)\varphi_i
$$
is called a $Q$-Wiener process in $H$.
\end{definition}
We denote by $L_\mathcal{F}^2(0,T;H)$ the set of all $\{\mathcal{F}_t\}_{t\in[0,T]}$-progressively measurable process $X(\cdot)$ taking values in $H$ such that $E\int_0^T \|X(t)\|_H^2dt<\infty$.
Denote by $L_\mathcal{F}^2(\Omega;C([0,T];H))$ the set of all $\{\mathcal{F}_t\}_{t\in[0,T]}$-progressively measurable continuous process $X(\cdot)$ taking values in $H$, such that
$
E\sup\limits_{t\in [0,T]}\|X(t)\|_H^2<\infty.
$
Denote by $C_\mathcal{F}([0,T];L^2(\Omega,H))$ the set of all $\{\mathcal{F}_t\}_{t\in[0,T]}$-progressively measurable process $X(\cdot)$ taking values in $H$, such that
$X(\cdot):[0,T]\rightarrow L_{\mathcal{F}_T}^2(\Omega;H)$ is continuous.

It is noted that the paper involves the eigenvalues and eigenvectors of several operators and matrices. To facilitate the reader's understanding, a table summarizing the symbols used for all eigenvalues and eigenvectors appearing in the sequel is provided below.

\begin{longtable}{l l}
\caption{Notation of eigenvalues and eigenvectors} \\
\hline
$\lambda_l^{M_N}$ & an eigenvalue of adjacency matrix $M_N$ \\
$v_l^{M_N}$ & an eigenvector of adjacency matrix $M_N$ \\
$\lambda_l^{Q_N}$ & an eigenvalue of correlation matrix $Q_N$ \\
$v_l^{Q_N}$ & an eigenvector of correlation matrix $Q_N$ \\
$\lambda_l^M$ & an eigenvalue of graphon operator $M$ \\
$f_l^M$ & an eigenfunction of graphon operator $M$ \\
$\lambda_l^{M^{[N]}}$ & an eigenvalue of graphon operator $M^{[N]}$ \\
$f_l^{M^{[N]}}$ & an eigenfunction of graphon operator $M^{[N]}$ \\
$\lambda_l^Q$ & an eigenvalue of trace class operator $Q$ \\
$f_l^Q$ & an eigenfunction of trace class operator $Q$ \\
$\lambda_l^{Q^{[N]}}$ & an eigenvalue of trace class operator $Q^{[N]}$ \\
$f_l^{Q^{[N]}}$ & an eigenfunction of trace class operator $Q^{[N]}$ \\
\hline
\end{longtable}

\section{Problem Formulation}
\qquad Supposed that $(\Omega,\mathcal{F},\mathbb{P})$ is a complete probability space on which $\{\widetilde{W}_t^i, 0\leq t\leq T\}_{i=1}^N$ are a set of one-dimensional standard Brownian motions with a nonnegative definite correlation matrix $Q_N=[\rho_{ij}]$, i.e. $d\widetilde{W}_t^i d\widetilde{W}_t^j=\rho_{ij}dt$.

We consider a weakly coupled large population system with $N$ individual agents, which distributed over a weighted undirected graph with $N$-node represented by its adjacency matrix $M_N=[m_{ij}]$. Each node here represents an agent. The $i$th agent evolves as the following dynamics
\begin{equation}\label{x_i}
  dx_t^i=\big[Ax_t^i+Bu_t^i+b\frac{1}{N}\sum\limits_{j=1}^N m_{ij}x_t^j\big]dt+\sigma d\widetilde{W}_t^i,
\end{equation}
where $A,B,b,\sigma$ are given constants, $x_t^i, u_t^i\in \mathbb{R}$ are the state and input of the $i$th agent, respectively, $\frac{1}{N}\sum_{j=1}^N m_{ij}x_t^j$ is the network state average, $\{x_0^i\}_{i=1}^N$ are initial conditions and are assumed to be deterministic for convenience. To the best knowledge of the authors, this kind of state models are first proposed in \cite{Dunyak1}.

Denote $\mathbf{u}=\{u^1,\ldots,u^N\}$. The cost function of the $i$th agent is defined by
\begin{equation}\label{ithcost}
  J^i(\mathbf{u})=\frac{1}{2}E\int_0^T\Big[Q\big(x_t^i-\Gamma\frac{1}{N}\sum\limits_{j=1}^N m_{ij}x_t^j\big)^2+R(u_t^i)^2\Big]dt+\frac{1}{2}EQ_T|x_T^i|^2
\end{equation}
where $Q,Q_T\geq 0$, $R>0$, $\Gamma\in \mathbb{R}$. The social cost of $N$ agents is
\begin{equation}\label{socialcost}
  J_{soc}^N(\mathbf{u})=\sum\limits_{j=1}^NJ^i(\mathbf{u}).
\end{equation}

We note that $\{\widetilde{W}^i\}_{i=1}^N$ can be transformed into a set of independent Brownian motions. The first case is that matrix $Q_N$ is positive definite. Thus, $Q_N=CC^T$ with  $$C=[c_{ij}^N]=\big(v_1^{Q_N},\cdots,v_N^{Q_N}\big)diag\Big\{\sqrt{\lambda_1^{Q_N}},\cdots,\sqrt{\lambda_N^{Q_N}}\Big\},$$ where
$\{v_i^{Q_N}\}_{i=1}^N$ and $\{\lambda_i^{Q_N}\}_{i=1}^N$ are eigenvectors and eigenvalues of $Q_N$.  It follows that
\begin{equation}\label{3a}
  C^{-1}\big(d\widetilde{W}_t^1, \cdots, d\widetilde{W}_t^N\big)^T\big(d\widetilde{W}_t^1, \cdots, d\widetilde{W}_t^N\big)\big(C^T\big)^{-1}=I_Ndt.
\end{equation}
Let $(W_t^1,\cdots, W_t^N)^T\triangleq C^{-1}(\widetilde{W}_t^1,\cdots, \widetilde{W}_t^N)^T$. From (\ref{3a})
and L\'{e}vy's characterization of Brownian motion, one has that $\{W^i\}_{i=1}^N$ are $N$ independent Brownian motions. The second case is that matrix $Q_N$ is nonnegative definite, but not positive definite. Denote $rank(Q_N)=d^N$ with $d^N<N$. Let $C_1=\big(v_1^{Q_N},\cdots,v_{d^N}^{Q_N}\big)diag\Big\{\sqrt{\lambda_1^{Q_N}},\cdots,\sqrt{\lambda_{d^N}^{Q_N}}\Big\}$, which is full column rank. Then one may find another full column rank matrix $C_2$ such that $C\triangleq(C_1,C_2)$ is invertible, which yields
\begin{equation*}
  Q_N=C_1C_1^T=\left(\begin{array}{cc}
                C_1,& C_2
              \end{array}
              \right)
\begin{pmatrix}
I_{d^N} &    \\
 & 0
\end{pmatrix}
\left(\begin{array}{c}
                C_1^T\\
                C_2^T
              \end{array}
              \right)
=C
\begin{pmatrix}
I_{d^N} &    \\
 & 0
\end{pmatrix}
C^T.
\end{equation*}
Then
\begin{equation*}
  C^{-1}\big(d\widetilde{W}_t^1, \cdots, d\widetilde{W}_t^N\big)^T\big(d\widetilde{W}_t^1, \cdots, d\widetilde{W}_t^N\big)\big(C^T\big)^{-1}=\begin{pmatrix}
I_{d^N} &    \\
 & 0
\end{pmatrix}
  dt.
\end{equation*}
Let $(W_t^1,\cdots, W_t^N)^T\triangleq C^{-1}(\widetilde{W}_t^1,\cdots, \widetilde{W}_t^N)^T$, which yields
\begin{equation}\label{3d}
  \big(dW_t^1,\cdots,dW_t^N\big)^T\big(dW_t^1,\cdots,dW_t^N\big)=
  \begin{pmatrix}
I_{d^N} &    \\
 & 0
\end{pmatrix}
  dt.
\end{equation}
By (\ref{3d}) and L\'{e}vy's characterization of Brownian motion, one has that $\{W^i\}_{i=1}^{d^N}$ are $d^N$ independent Brownian motions. Moreover, the quadratic variations of $\{W^i\}_{i=d^N+1}^N$ are zero. By Proposition 1.12 in \cite{Revuz}, one has $W_t^i=W_0^i=0$, a.s. for every $t$, $d^N+1\leq i\leq N$. Then we derive
\begin{equation*}
  \big(W_t^1,\cdots,W_t^{d^N},0,\cdots,0\big)^T=C^{-1}\big(\widetilde{W}_t^1,\cdots,\widetilde{W}_t^N\big)^T,\ a.s.,\ \forall t\in[0,T],
\end{equation*}
and
\begin{equation}\label{nonnegativedefinite}
  (\widetilde{W}_t^1,\cdots,\widetilde{W}_t^N)^T=C\big(W_t^1,\cdots,W_t^{d^N},0,\cdots,0\big)^T=C_1\big(W_t^1,\cdots,W_t^{d^N}\big)^T,\ a.s.,\ \forall t\in[0,T].
\end{equation}

Let $rank(Q_N)=d_N$, $1\leq d_N\leq N$. Then, based on the above discussion, the individual dynamics \eqref{x_i} can be rewritten as follows
\begin{equation*}
  dx_t^i=\big[Ax_t^i+Bu_t^i+b\frac{1}{N}\sum\limits_{j=1}^N m_{ij}x_t^j\big]dt+\sigma c_{i1}^N dW_t^1+\cdots+\sigma c_{id_N}^N dW_t^{d_N}.
\end{equation*}

In this paper, we apply the infinite dimensional approach to sequentially study the mean field social control problem under centralized and decentralized information patterns.
Let $\mathcal{F}_t=\sigma (\widetilde{W}_s^i, 0\leq s\leq t, 1\leq i\leq N)\vee\mathcal{N}$, where $\mathcal{N}$ denotes the set of all $\mathbb{P}$-null sets. Define the centralized control set as
$$
\mathcal{U}^c=\Big\{(u^1,\ldots,u^N)|u^i \mbox{ is } \mathcal{F}_t\mbox{-progressively measurable with}\ E\int_0^T|u_t^i|^2dt<\infty\Big\}.
$$
We first propose the following optimization problem.
\begin{problem(P1)}
Seek a set of centralized controls to optimize the social cost $J_{soc}^N$ for the system (\ref{x_i})-(\ref{socialcost}), i.e. $\inf_{\mathbf{u}\in\mathcal{U}^c}J_{soc}^N(\mathbf{u})$.
\end{problem(P1)}

In the centralized information pattern, it is necessary to assume that each individual agent knows the relationships and interactions among all agents (the adjacency matrix $M_N$ and the correlation matrix $Q_N$).
In practice, however, agents usually express a strong preference for their privacy to be protected in some way. Specifically, agents refrain from sharing with others the information for their social relationships. Therefore, in this scenario, it is difficult to know $M_N$ and $Q_N$ for each agent. In the privacy-preserving pattern, we assume that the agents are informed of the limits of $M_N$ and $Q_N$ in some sense (if the limits exist), which will be discussed in Section 5 to obtain a set of asymptotically optimal decentralized strategies.

For example, we can consider a social optima problem in the privacy-preserving pattern as follows. For a given community, the government can ascertain the interconnections (denoted as $M_N$ and $Q_N$) among the residents through certain methods, such as big data analysis. Due to privacy protection concerns, the government is restricted from disclosing these information to the residents. While the government can enable residents to achieve asymptotically optimal strategies by appropriately conveying the information of limits of $M_N$ and $Q_N$ to them.

It is well known that the decentralized strategies in traditional sense are adapted to the filtrations generated by their own state processes. Different from the former, the decentralized strategies discussed in this paper means that the strategies are no longer depend on $M_N$ and $Q_N$.

We are now in the position to define the decentralized control set as
$$
\mathcal{U}_{sd}=\Big\{(u^1,\ldots,u^N)|u^i\mbox{ is } \mathcal{G}_t^i\mbox{-progressively measurable such that } E\int_0^T|u_t^i|^2dt<\infty\Big\},
$$
where $\mathcal{G}_t^i=\sigma(x_s^i, W_s^k, 0\leq s\leq t, 1\leq k\leq d)\vee\mathcal{N}$ with $d\leq d_N$. Thus, we can consider the following asymptotic social optimality problem.


\begin{problem(P2)}
Seek s set of decentralized control laws $\hat{\mathbf{u}}=(\hat{u}^1,\cdots,\hat{u}^N)$ in $\mathcal{U}_{sd}$ to asymptotically optimize the social cost for system (\ref{x_i})-(\ref{socialcost}), i.e.
$$
\Big|\frac{1}{N}J_{soc}^N(\hat{\mathbf{u}})-\frac{1}{N}\inf_{\mathbf{u}\in\mathcal{U}_c}J_{soc}^N(\mathbf{u})\Big|=o(1).
$$
\end{problem(P2)}

\section{Optimal centralized control}
\qquad In this section, we first transform Problem (P1) into an infinite dimensional stochastic optimal control problem, and then obtain the centralized close-loop controls.

For any given $u^i\in \mathcal{U}_c$, (\ref{x_i}) has a unique solution. Indeed, let $X=(x^1,\cdots,x^N)^T$, $U=(u^1,\cdots,u^N)^T$, and $\widetilde{W}=(\widetilde{W}^1,\cdots,\widetilde{W}^N)^T$. Then we may rewrite the state equation (\ref{x_i}) as the following vector-valued SDE
\begin{equation}\label{vectorSDE}
  dX_t=\Big[AX_t+BU_t+b\frac{1}{N}M_N X_t\Big]dt+\sigma d\widetilde{W}_t.
\end{equation}
Clearly, \eqref{vectorSDE} has a unique solution $X\in L_{\mathcal{F}}^2(\Omega;C([0,T];\mathbb{R}^N))$ by the well known result for SDE.

Let $x_t^{[N]}=\sum_{i=1}^N x_t^i \mathds{1}_{P_i}$ and $u_t^{[N]}=\sum_{i=1}^N u_t^i \mathds{1}_{P_i}$. Then  $x_t^{[N]},\ u_t^{[N]}\in L^2[0,1]$ for all $t,\ \omega$. Since $x_t^{[N]}(\alpha)=x_t^i$ ($u_t^{[N]}(\alpha)=u_t^i$), $\alpha\in P_i$, there is a one-to-one mapping relationship between $X_t$ and $x_t^{[N]}$ ($U_t$ and $u_t^{[N]}$).
To derive the dynamic of $x_t^{[N]}$, we first show that $W_t^{[N]}\triangleq \sum_{i=1}^N\widetilde{W}_t^i \mathds{1}_{P_i}$ is a Hilbert space valued Wiener process. Indeed, let $Q^{[N]}(\alpha,\beta)=\sum_{i=1}^N\sum_{j=1}^N\mathds{1}_{P_i}(\alpha)\mathds{1}_{P_j}(\beta)\rho_{ij}$, one can define integral operator $Q^{[N]}$ as follows
$$
(Q^{[N]}\varphi)(\alpha)=\int_{[0,1]}Q^{[N]}(\alpha,\beta)\varphi(\beta)d\beta,\quad \forall\varphi\in L^2[0,1].
$$
Denote $S_{v_j^{Q_N}}=\sum_{i=1}^N v_j^{Q_N}(i) \mathds{1}_{P_i}$ for $j=1,\cdots,d_N$, where $v_j(i)$ is the $i$th elements of $v_j$. By \cite[Prop. 3]{Gao4} and Lemma \ref{buchongGaoprop}, $\{\frac{1}{N}\lambda_j^{Q_N}\}_{j=1}^{d_N}$ and $\{\sqrt{N}S_{v_j^{Q_N}}\}_{j=1}^{d_N}$ are nonzero eigenvalues and orthonormal eigenfunctions of operator $Q^{[N]}$, respectively.
Then
\begin{align}\label{3e}
  W_t^{[N]} &=\sum_{i=1}^N \mathds{1}_{P_i} \widetilde{W}_t^i=\sum_{i=1}^N \mathds{1}_{P_i} \sum_{j=1}^{d_N} c_{ij}^N W_t^j\nonumber\\
   &=\sum_{j=1}^{d_N} \sum_{i=1}^N \mathds{1}_{P_i} c_{ij}^N W_t^j=\sum_{j=1}^{d_N} \sum_{i=1}^N \mathds{1}_{P_i} \sqrt{\lambda_j^{Q_N}} v_j^{Q_N}(i) W_t^j=\sum_{j=1}^{d_N} \sqrt{\frac{\lambda_j^{Q_N}}{N}}\sqrt{N}S_{v_j^{Q_N}}W_t^j.
\end{align}
Since operator $Q^{[N]}$ is self-adjoint, positive and
$$
tr(Q^{[N]})=\sum_{j=1}^{d_N} \langle Q^{[N]}\sqrt{N}S_{v_j^{Q_N}},\sqrt{N}S_{v_j^{Q_N}}\rangle=\sum_{j=1}^{d_N}\frac{\lambda_j^{Q_N}}{N}<\infty,
$$
by (\ref{3e}), we derive that $W_t^{[N]}$ is a $Q^{[N]}$-Wiener process.

On the other hand, by applying properties of Bochner integral, we obtain, for a.s. $\omega$ and each $t$,
$$
\sum_{i=1}^N x_t^i \mathds{1}_{P_i}=\sum_{i=1}^N x_0^i \mathds{1}_{P_i}+\int_0^t \Big[A\sum_{i=1}^N x_s^i \mathds{1}_{P_i}+B\sum_{i=1}^N u_s^i \mathds{1}_{P_i}+b\sum_{i=1}^N \Big(\frac{1}{N}\sum_{j=1}^N m_{ij} x_s^j\Big)\mathds{1}_{P_i} \Big]ds+\sigma\sum_{i=1}^N \widetilde{W}_t^i \mathds{1}_{P_i}.
$$
Then $x_t^{[N]}$ satisfies,  for arbitrary $t$, a.s. $\omega$,
\begin{equation}\label{xt[N]}
  x_t^{[N]}=x_0^{[N]}+\int_0^t (Ax_s^{[N]}+Bu_s^{[N]}+bM^{[N]}x_s^{[N]})ds+\sigma W_t^{[N]},
\end{equation}
where operator $M^{[N]}$ is defined by (\ref{stepgraphon}) and (\ref{graphonoperator}). For $u^i\in\mathcal{U}_c$, it is easy to verify $u^{[N]}\in L_\mathcal{F}^2([0,T];L^2[0,1])$, and by applying the Dominated Convergence Theorem (DCT), we can deduce $x^{[N]}\in C_\mathcal{F}([0,T];L^2(\Omega,L^2[0,1]))$. Moreover, since $\int_0^T|x_t^i|^2dt$ and $\int_0^T|u_t^i|^2dt<\infty$, a.s., one has
\begin{align*}\label{unknown5}
  & \int_0^T\|(AI+bM^{[N]})x_t^{[N]}+Bu_t^{[N]}\|dt\nonumber\\
  \leq & \sqrt{T}\|AI+bM^{[N]}\|_{op}\Big(\frac{1}{N}\sum_{i=1}^N\int_0^T|x_t^i|^2dt\Big)^\frac{1}{2}+|B|\sqrt{T}\Big(\frac{1}{N}\sum_{i=1}^N\int_0^T|u_t^i|^2dt\Big)^\frac{1}{2}<\infty\quad a.s..
\end{align*}
Thus, it follows from \eqref{xt[N]} that $L^2[0,1]$-valued process $x_t^{[N]}=\sum_{i=1}^N x_t^i \mathds{1}_{P_i}$ is a strong solution of the following stochastic evolution equation (SEE):
\begin{equation}\label{yt}
  \left\{\begin{split}
            &dy_t=\big[(AI+bM^{[N]})y_t+Bu_t^{[N]}\big]dt+\sigma dW_t^{[N]},  \\
             &y_0=x_0^{[N]}=\sum_{i=1}^N x_0^i \mathds{1}_{P_i}.
         \end{split}
  \right.
\end{equation}
In addition, for given $x_0^{[N]}=\sum_{i=1}^N x_0^i \mathds{1}_{P_i}$, $u_t^{[N]}=\sum_{i=1}^N u_t^i \mathds{1}_{P_i}$,
similar to the proof of \cite[Th. 3.14]{lvqi}, we can show that the equation \eqref{yt} admits a unique strong (mild) solution, and thus,
$x_t^{[N]}\in C_\mathcal{F}([0,T];L^2(\Omega,L^2[0,1]))$ is a continuous modification of the unique strong (mild) solution of (\ref{yt}).

Next, we transform social cost $J_{soc}^N(u)$ into a cost function in infinite dimension sense. For $\alpha\in [0,1]$, it holds that
\begin{align*}
  \big((I-\Gamma M^{[N]})x_t^{[N]}\big)(\alpha)&=\sum_{i=1}^N \mathds{1}_{P_i}(\alpha)x_t^i-\Gamma\int_0^1 M^{[N]}(\alpha,\beta) x_t^{[N]}(\beta)d\beta \\
  &=\sum_{i=1}^N \mathds{1}_{P_i}(\alpha)x_t^i-\Gamma\int_0^1 \Big(\sum_{i=1}^N \sum_{j=1}^N \mathds{1}_{P_i}(\alpha) \mathds{1}_{P_j}(\beta) m_{ij}\Big) \Big(\sum_{j=1}^N \mathds{1}_{P_j}(\beta)x_t^j\Big)d\beta\\
  &=\sum_{i=1}^N \mathds{1}_{P_i}(\alpha)\Big[x_t^i-\Gamma\frac{1}{N}\sum_{j=1}^N m_{ij} x_t^j\Big]
\end{align*}
and so
\begin{align}\label{(I-gammaMN)xt[N]}
  \|(I-\Gamma M^{[N]})x_t^{[N]}\|_2^2&=\int_0^1 \Big|\sum_{i=1}^N \mathds{1}_{P_i}(\alpha)\Big[x_t^i-\Gamma\frac{1}{N}\sum_{j=1}^N m_{ij} x_t^j\Big]\Big|^2d\alpha \nonumber\\
   &=\int_0^1 \sum_{i=1}^N \mathds{1}_{P_i}(\alpha) \Big|x_t^i-\Gamma\frac{1}{N}\sum_{j=1}^N m_{ij} x_t^j\Big|^2d\alpha\nonumber\\
   &=\frac{1}{N} \sum_{i=1}^N \Big|x_t^i-\Gamma\frac{1}{N}\sum_{j=1}^N m_{ij} x_t^j\Big|^2.
\end{align}
Similarly, one has
\begin{equation}\label{ut[N]xT[N]}
    \|u_t^{[N]}\|_2^2=\int_0^1 |u_t^{[N]}(\alpha)|^2 d\alpha=\frac{1}{N}\sum_{i=1}^N |u_t^i|^2, \quad
  \|x_T^{[N]}\|_2^2=\int_0^1 |x_T^{[N]}(\alpha)|^2 d\alpha=\frac{1}{N}\sum_{i=1}^N |x_T^i|^2.
\end{equation}
According to the definition of uniqueness for the mild solutions of SEE \cite{Prato}, together with (\ref{ithcost}), (\ref{(I-gammaMN)xt[N]}) and (\ref{ut[N]xT[N]}), we have
\begin{align}\label{J(u[N])=1/NJsoc}
 J(u^{[N]})&\triangleq\frac{1}{2}E\int_0^T \big[Q\|(I-\Gamma M^{[N]})y_t\|_2^2+R\|u_t^{[N]}\|_2^2\big]dt+\frac{1}{2}EQ_T\|y_T\|_2^2 \nonumber\\
   &=\frac{1}{2}E\int_0^T \big[Q\|(I-\Gamma M^{[N]})x_t^{[N]}\|_2^2+R\|u_t^{[N]}\|_2^2\big]dt+\frac{1}{2}EQ_T\|x_T^{[N]}\|_2^2 \nonumber\\
   &=\frac{1}{N}\sum_{i=1}^N\Big\{\frac{1}{2}E\int_0^T\Big[Q\Big(x_t^i-\Gamma\frac{1}{N}\sum\limits_{j=1}^N m_{ij}x_t^j\Big)^2+R(u_t^i)^2\Big]dt+\frac{1}{2}EQ_T|x_T^i|^2\Big\}\nonumber\\
   &=\frac{1}{N}J_{soc}^N(\mathbf{u}).
\end{align}

Define $\mathcal{U}^{step}=\{u^{[N]}|u^{[N]}=\sum_{i=1}^N \mathds{1}_{P_i}u^i,\ u^i\in\mathcal{U}_c\}$. From the above discussion, it follows that the optimization Problem (P1) is equal to the following infinite dimensional optimal control problem.
\begin{problem(P1a)}
Find $\bar{u}^{[N]}\in\mathcal{U}^{step}$ such that
$$
J(\bar{u}^{[N]})=\inf_{u^{[N]}\in\mathcal{U}^{step}}J(u^{[N]})=\inf_{u^{[N]}\in\mathcal{U}^{step}} \Big\{\frac{1}{2}E\int_0^T \big[Q\|(I-\Gamma M^{[N]})y_t\|_2^2+R\|u_t^{[N]}\|_2^2\big]dt+\frac{1}{2}EQ_T\|y_T\|_2^2\Big\}
$$
subject to the following SEE
$$
dy_t=\big[(AI+bM^{[N]})y_t+Bu_t^{[N]}\big]dt+\sigma dW_t^{[N]}, \quad y_0=x_0^{[N]}=\sum_{i=1}^N x_0^i \mathds{1}_{P_i}.
$$
\end{problem(P1a)}
To solve Problem (P1a), we first consider an infinite dimensional control problem in a ``larger'' admissible control set. Define $\mathcal{U}=\{u|u\in L_\mathcal{F}^2(0,T;L^2[0,1])\}$, and we have $\mathcal{U}^{step}\subset\mathcal{U}$.
\begin{problem(P1b)}
Find $\bar{u}\in\mathcal{U}$ such that
$$
J(\bar{u})=\inf_{u\in\mathcal{U}}J(u)=\inf_{u\in\mathcal{U}} \Big\{\frac{1}{2}E\int_0^T \big[Q\|(I-\Gamma M^{[N]})y_t\|_2^2+R\|u_t\|_2^2\big]dt+\frac{1}{2}EQ_T\|y_T\|_2^2\Big\}
$$
subject to the following SEE
$$
dy_t=\big[(AI+bM^{[N]})y_t+Bu_t\big]dt+\sigma dW_t^{[N]}, \quad y_0=x_0^{[N]}=\sum_{i=1}^N x_0^i \mathds{1}_{P_i}.
$$
\end{problem(P1b)}

\begin{proposition}\label{propcentralized}
The optimal control law of Problem (P1b) is given by
\begin{equation}\label{centralizedcontrol}
  \bar{u}_t=-\frac{2B}{R}\Pi_t^{[N]}\bar{y}_t,
\end{equation}
where $\Pi^{[N]}$ satisfies
\begin{equation}\label{PiN}
  \left\{\begin{split}
     &0=\langle \frac{d}{dt}\Pi_t^{[N]}x,x\rangle+2\langle \Pi_t^{[N]}x,(AI+bM^{[N]})x\rangle-\langle \frac{2B^2}{R}(\Pi_t^{[N]})^2x,x\rangle+\langle \frac{1}{2}\widetilde{G}^{[N]}x,x\rangle,\quad x\in L^2[0,1],  \\
      &\Pi_T^{[N]}=\frac{1}{2}Q_TI,
  \end{split}\right.
\end{equation}
with $\widetilde{G}^{[N]}\triangleq Q(I-\Gamma M^{[N]})^*(I-\Gamma M^{[N]})$, and the optimal state is described by
\begin{equation}\label{centralizedstate}
  d\bar{y}_t=(AI+bM^{[N]}-\frac{2B^2}{R}\Pi_t^{[N]})\bar{y}_tdt+\sigma dW_t^{[N]}, \quad \bar{y}_0=x_0^{[N]}=\sum_{i=1}^N x_0^i \mathds{1}_{P_i}.
\end{equation}
\end{proposition}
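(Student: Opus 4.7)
The plan is to prove the proposition by the standard completion-of-squares argument for LQ problems, adapted to the Hilbert-space-valued setting on $L^2[0,1]$. Before invoking the Riccati equation I would first note that \eqref{PiN} is a backward operator-valued differential Riccati equation with \emph{bounded} coefficients, since $AI+bM^{[N]}$ and $\widetilde{G}^{[N]}$ are bounded and self-adjoint on $L^2[0,1]$ (recall $M^{[N]}$ is compact and self-adjoint). Standard results for differential Riccati equations in Hilbert spaces (e.g.\ Bensoussan--Da Prato--Delfour--Mitter or the treatment in \cite{lvqi}) then yield a unique, continuous, nonnegative self-adjoint solution $\Pi^{[N]}\in C([0,T];\mathscr{L}(L^2[0,1]))$. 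Moreover, with this bounded feedback gain the closed-loop SEE \eqref{centralizedstate} is of the same type as \eqref{yt}, so it admits a unique mild solution $\bar y\in C_{\mathcal F}([0,T];L^2(\Omega;L^2[0,1]))$ and the feedback $\bar u_t=-\tfrac{2B}{R}\Pi_t^{[N]}\bar y_t$ is admissible in $\mathcal U$.

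The core of the proof is to apply It\^o's formula in $L^2[0,1]$ to $t\mapsto\langle \Pi_t^{[N]}y_t,y_t\rangle$ for an arbitrary $u\in\mathcal U$ with associated state $y$. Since the driving noise is the $Q^{[N]}$-Wiener process $W^{[N]}$, the It\^o correction produces $\sigma^2\,\mathrm{tr}(\Pi_t^{[N]}Q^{[N]})\,dt$. Substituting the Riccati identity \eqref{PiN} to eliminate $\langle\dot\Pi_t^{[N]}y_t,y_t\rangle+2\langle\Pi_t^{[N]}y_t,(AI+bM^{[N]})y_t\rangle$, integrating over $[0,T]$, taking expectation (the stochastic integral is a true martingale thanks to the estimates on $y$ and the boundedness of $\Pi^{[N]}$), and using the terminal condition $\Pi_T^{[N]}=\tfrac12 Q_TI$, one obtains an identity that, after inserting it into $J(u)$, reduces the cost to
\[
J(u)=\langle \Pi_0^{[N]}y_0,y_0\rangle+\tfrac{\sigma^2}{2}\int_0^T\mathrm{tr}(\Pi_t^{[N]}Q^{[N]})\,dt+\tfrac{R}{2}\,E\!\int_0^T\Bigl\|u_t+\tfrac{2B}{R}\Pi_t^{[N]}y_t\Bigr\|_2^2\,dt,
\]
where the cancellation of $\tfrac12 Q\|(I-\Gamma M^{[N]})y_t\|_2^2$ comes from the $\widetilde G^{[N]}$ term in \eqref{PiN} and the completion of squares uses the $\tfrac{2B^2}{R}(\Pi_t^{[N]})^2$ term. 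The first two summands are independent of $u$, so the infimum is attained iff the last integrand vanishes a.e., i.e.\ iff $u$ coincides with the feedback \eqref{centralizedcontrol} evaluated along its own closed-loop state.

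The main obstacle I anticipate is the rigorous justification of the Hilbert-valued It\^o expansion: one must verify that $\Pi^{[N]}$ is strongly continuously differentiable in a sense compatible with Da Prato--Zabczyk's It\^o formula (which does not require $y$ to be a strong solution, only a mild one), and that the trace $\mathrm{tr}(\Pi_t^{[N]}Q^{[N]})$ is well defined and integrable — both of which follow here from the boundedness of $\Pi^{[N]}$ together with $Q^{[N]}\in\mathscr L_1(L^2[0,1])$. A secondary technical point is to argue that the stochastic integral in the It\^o expansion has zero expectation; this uses the continuity of $\Pi^{[N]}$ and the a priori estimate $E\sup_{t\le T}\|y_t\|_2^2<\infty$ for mild solutions of \eqref{yt} with $u\in\mathcal U$. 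Once these two items are in place, the completion-of-squares identity and the admissibility of the feedback complete the proof.
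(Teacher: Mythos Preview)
Your argument is correct and is the same classical LQ route the paper takes, except that the paper does not carry out the completion of squares itself: it rewrites $J(u)=\tfrac12 E\int_0^T[\langle\widetilde G^{[N]}y_t,y_t\rangle+\langle RIu_t,u_t\rangle]\,dt+\tfrac12 E\langle Q_TIy_T,y_T\rangle$ and then invokes Theorem~4.1 of Ichikawa \cite{Ichikawa} for the feedback law and Theorem~2.1/Proposition~2.1 of \cite{Bensoussan2} for the existence and uniqueness of $\Pi^{[N]}\in C_{\mathcal S}([0,T];\Sigma(L^2[0,1]))$. Your It\^o/completion-of-squares computation is precisely what underlies the cited Ichikawa result, so you are unpacking the citation rather than departing from it; the technical points you flag (regularity of $\Pi^{[N]}$, finiteness of $\mathrm{tr}(\Pi_t^{[N]}Q^{[N]})$, vanishing expectation of the stochastic integral) are exactly the hypotheses those references verify in the bounded-generator setting. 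One harmless slip: since $D^2\langle\Pi y,y\rangle=2\Pi$, the It\^o correction is $\sigma^2\,\mathrm{tr}(\Pi_t^{[N]}Q^{[N]})\,dt$ rather than $\tfrac{\sigma^2}{2}\,\mathrm{tr}(\Pi_t^{[N]}Q^{[N]})\,dt$; the term is independent of $u$, so the optimality conclusion is unaffected.
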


\begin{proof}
Since
\begin{align*}
  J(u)&=\frac{1}{2}E\int_0^T \big[Q\|(I-\Gamma M^{[N]})y_t\|_2^2+R\|u_t\|_2^2\big]dt+\frac{1}{2}EQ_T\|y_T\|_2^2 \\
   &=\frac{1}{2}E\int_0^T \big[\langle \widetilde{G}^{[N]}y_t,y_t\rangle+\langle RIu_t,u_t\rangle\big]dt+\frac{1}{2}E\langle Q_TIy_T,y_T\rangle,
\end{align*}
from Theorem 4.1 in \cite{Ichikawa}, one has that $\bar{u}_t=-\frac{2B}{R}\Pi_t^{[N]}\bar{y}_t$ is the optimal control law of Problem (P1b), where $\Pi^{[N]}$ satisfies Riccati equation (\ref{PiN}). By Theorem 2.1 and Proposition 2.1 in \cite{Bensoussan2}, (\ref{PiN}) has a unique weak solution $\Pi^{[N]}\in C_\mathcal{S}([0,T];\Sigma(L^2[0,1]))$.
\end{proof}
Next, we prove that $\bar{u}_t=-\frac{2B}{R}\Pi_t^{[N]}\bar{y}_t$ is in $\mathcal{U}^{step}$. For any given graphon operator $M$, we first propose the decomposed form of solution of the following operator-valued Riccati equation
\begin{equation}\label{Pt}
\left\{
  \begin{split}
  &\frac{d}{dt}P_t=-(2AI+2bM)P_t+\frac{2B^2}{R}(P_t)^2-\frac{1}{2}Q(I-\Gamma M)^*(I-\Gamma M),\\
      & P_T=\frac{1}{2}Q_TI.
  \end{split}
  \right.
\end{equation}
\begin{proposition}\label{decomposedprop}
For any given graphon operator $M$ with  orthonormal eigenfunctions $\{f_l^M\}_{l=1}^\infty$ associated with all non-zero eigenvalues $\{\lambda_l^M\}_{l=1}^\infty$, let $\mathcal{S}=(\ker M)^\bot\subset L^2[0,1]$, $\mathcal{S}^\bot$ be orthogonal complement of the former, and let $I_\mathcal{S}$, $I_{\mathcal{S}^\bot}$ be projections on $\mathcal{S}$, $\mathcal{S}^\bot$, respectively. Then $\Pi_t$, given as follows, is the unique weak solution of (\ref{Pt}):
\begin{equation}\label{decomposedform}
  \Pi_t v=\Pi_t^\bot I_{\mathcal{S}^\bot}v+\sum_{l=1}^\infty \bar{\Pi}_t^l (f_l^M\otimes f_l^M)v=\Pi_t^\bot Iv+\sum_{l=1}^\infty (\bar{\Pi}_t^l-\Pi_t^\bot)(f_l^M\otimes f_l^M)v,\quad \forall t\in [0,T],\ v\in L^2[0,1],
\end{equation}
where $(f_l^M\otimes f_l^M)v\triangleq \langle f_l^M,v\rangle f_l^M$, $\{\bar{\Pi}_t^l\}_{l=1}^\infty$ and $\Pi_t^\bot$ are, respectively, the solutions of the following real valued Riccati equations
\begin{equation}\label{Pil}
\left\{
  \begin{split}
     & \frac{d}{dt}\bar{\Pi}_t^l+(2A+2b\lambda_l^M)\bar{\Pi}_t^l-\frac{2B^2}{R}(\bar{\Pi}_t^l)^2+\frac{1}{2}Q(1-\Gamma \lambda_l^M)^2=0, \\
      & \bar{\Pi}_T^l=\frac{1}{2}Q_T
  \end{split}
  \right.
\end{equation}
and
\begin{equation}\label{Pibot}
\left\{
\begin{split}
 & \frac{d}{dt}\Pi_t^\bot+2A\Pi_t^\bot-\frac{2B^2}{R}(\Pi_t^\bot)^2+\frac{1}{2}Q=0,\\
    & \Pi_T^\bot=\frac{1}{2}Q_T.
\end{split}
\right.
\end{equation}
\end{proposition}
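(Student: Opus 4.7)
The plan is to exploit the spectral decomposition of the self-adjoint compact graphon operator $M$ and thereby reduce the operator-valued Riccati equation (\ref{Pt}) to an uncoupled family of scalar Riccati equations, one on each eigenspace. Since $M$ is self-adjoint and compact, $L^2[0,1]=\mathcal{S}\oplus\mathcal{S}^\bot$ with $\mathcal{S}=\overline{\mathrm{span}}\{f_l^M\}_{l\geq 1}$ and $\mathcal{S}^\bot=\ker M$. The ansatz (\ref{decomposedform}) prescribes $\Pi_t f_k^M=\bar{\Pi}_t^k f_k^M$ and $\Pi_t v=\Pi_t^\bot v$ for every $v\in\mathcal{S}^\bot$, so $\Pi_t$ and $M$ are simultaneously diagonal in the orthonormal system obtained by adjoining $\{f_l^M\}_{l\geq 1}$ with any orthonormal basis of $\mathcal{S}^\bot$. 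In particular $\Pi_t$ commutes with $M$, and $(I-\Gamma M)^*(I-\Gamma M)$ is diagonal in the same system with eigenvalues $(1-\Gamma\lambda_l^M)^2$ on $\mathrm{span}(f_l^M)$ and $1$ on $\mathcal{S}^\bot$.

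With this compatibility secured, I would substitute the candidate (\ref{decomposedform}) into the weak form of (\ref{Pt}) and test against $x=f_l^M$ and against an arbitrary $v\in\mathcal{S}^\bot$ in turn. Each test reduces the operator identity to a scalar identity in $\mathbb{R}$: testing against $f_l^M$ yields precisely the scalar Riccati equation (\ref{Pil}) driving $\bar{\Pi}_t^l$, while testing against $v\in\mathcal{S}^\bot$ collapses to (\ref{Pibot}) for $\Pi_t^\bot$. Matching the terminal values $\bar{\Pi}_T^l=\Pi_T^\bot=\tfrac{1}{2}Q_T$ immediately gives $\Pi_T=\tfrac{1}{2}Q_T I$, as required. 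Uniqueness of the weak solution to (\ref{Pt}) is already quoted in the proof of Proposition \ref{propcentralized} (via Bensoussan's theory on symmetric operator-valued Riccati equations), so every weak solution must agree with the expression (\ref{decomposedform}).

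The step requiring genuine care — and the principal obstacle — is controlling the infinite series and justifying the termwise manipulations needed to add the scalar identities on each eigenspace back up into the weak identity for $\Pi_t$. I would first establish uniform bounds $0\leq\bar{\Pi}_t^l,\,\Pi_t^\bot\leq C$ via the standard positivity–comparison theory for scalar Riccati equations, using that the eigenvalues $\lambda_l^M$ of the compact operator $M$ are bounded and tend to zero. Continuous dependence of (\ref{Pil}) on the parameter $\lambda_l^M$ (together with the observation that (\ref{Pil}) reduces to (\ref{Pibot}) as $\lambda_l^M\to 0$) then yields $\bar{\Pi}_t^l\to\Pi_t^\bot$ uniformly in $t$, so the series $\sum_{l}(\bar{\Pi}_t^l-\Pi_t^\bot)(f_l^M\otimes f_l^M)$ converges in operator norm to a bounded self-adjoint operator depending continuously on $t$. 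This supplies the regularity needed to differentiate the ansatz term by term in the weak sense and to sum the eigenspace-wise identities into the full weak Riccati identity, completing the verification.
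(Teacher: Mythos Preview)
Your proposal is correct and follows the same architecture as the paper's proof: diagonalise everything in the spectral basis of $M$, reduce (\ref{Pt}) to the scalar equations (\ref{Pil}) and (\ref{Pibot}), verify the ansatz, and invoke the uniqueness already cited from Bensoussan et al. The one substantive difference is in how you handle the infinite series. The paper does not use the convergence $\bar{\Pi}_t^l\to\Pi_t^\bot$; it relies only on the \emph{uniform boundedness} $0\le\bar{\Pi}_t^l\le C$ (obtained by comparison with the linearised equation and the crude estimate $|\lambda_l^M|\le 1$), combines this with Bessel's inequality to show that $\sum_l\bar{\Pi}_t^l\langle v,f_l^M\rangle f_l^M$ converges in $L^2[0,1]$ for each fixed $v$, and then verifies the \emph{integral} form of (\ref{Pt}) by dominated convergence for Bochner integrals. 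You instead appeal to continuous dependence of (\ref{Pil}) on the parameter $\lambda_l^M$ to obtain $\bar{\Pi}_t^l\to\Pi_t^\bot$ uniformly in $t$, which upgrades the series to operator-norm convergence and lets you differentiate termwise in the \emph{differential} weak form. Your route yields a stronger regularity conclusion (norm rather than strong continuity in $t$) at the cost of an extra ODE stability lemma; the paper's route is more elementary---it never needs $\lambda_l^M\to 0$, only $|\lambda_l^M|\le 1$---and so would work unchanged even if the eigenvalue sequence failed to converge to zero, though of course compactness of $M$ forces that anyway.
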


\begin{proof}
Let $\tilde{\Pi}^l$ be the solution of the following linear equation
\begin{equation*}
  \frac{d}{dt}\tilde{\Pi}_t^l+(2A+2b\lambda_l^M)\tilde{\Pi}_t^l+\frac{1}{2}Q(1-\Gamma \lambda_l^M)^2=0, \quad \tilde{\Pi}_T^l=\frac{1}{2}Q_T.
\end{equation*}
Applying \cite[Section 8, Th 2]{Sauvigny} and \cite[Lem 7]{Gao1} yields $|\lambda_l^M|\leq 1$, and thus,
there exists a constant $C$ such that $\sup_{1\leq l\leq\infty}\sup_{0\leq t\leq T} |\tilde{\Pi}_t^l|^2\leq C$. By \cite[Th 3.5]{Freiling}, one has $0\leq \bar{\Pi}_t^l\leq \tilde{\Pi}_t^l\leq C$ for all $l$. For any given $v\in L^2[0,1]$, denote $F_n(t)=\sum_{l=1}^n F^l(t)\langle v,f_l^M\rangle f_l^M,\ n=1,2,\cdots$, where $F^l(t)=(2A+2b\lambda_l^M)\bar{\Pi}_t^l-\frac{2B^2}{R}(\bar{\Pi}_t^l)^2+\frac{1}{2}Q(1-\Gamma \lambda_l^M)^2$. By Bessel inequality and boundedness of $\bar{\Pi}_t^l$, there is a constant $C$, such that for each $n$ and $t$,
\begin{equation}\label{dominated}
  \|F_n(t)\|_2^2=\sum_{l=1}^n \Big|F^l(t)\langle v,f_l^M\rangle\Big|^2\leq C\sum_{l=1}^n |\langle v,f_l^M\rangle|^2\leq C\|v\|_2^2,
\end{equation}
which implies the boundedness of $F_n(t)$.
From (\ref{dominated}), we also have $\sum_{l=1}^n |F^l(t)\langle v,f_l^M\rangle|^2$ converges for all $t$. For $n>m$, one has $\|F_n(t)-F_m(t)\|_2^2=\sum_{j=m+1}^n |F^l(t)\langle v,f_l^M\rangle|^2$. Then $\{F_n(t)\}_{n=1}^\infty$ is a Cauchy sequence in $L^2[0,1]$. By the completeness of $L^2[0,1]$, one has $\{F_n(t)\}_{n=1}^\infty$ converges in $L^2[0,1]$ for all $t$, and $\sum_{l=1}^\infty F^l(t)\langle v,f_l^M\rangle f_l^M\in \overline{span\{f_1^M,f_2^M,\cdots\}}$. Then, by (\ref{Pil}) and the dominated convergence theorem for Bochner integrals \cite[Th. 2.18]{lvqi}, we have
\begin{align}\label{sumPil}
  &\sum_{l=1}^\infty \bar{\Pi}_t^l\langle v,f_l^M\rangle f_l^M \nonumber\\
  =&\frac{1}{2}Q_T \sum_{l=1}^\infty \langle v,f_l^M\rangle f_l^M+\sum_{l=1}^\infty \int_t^T \Big[(2A+2b\lambda_l^M)\bar{\Pi}_s^l-\frac{2B^2}{R}(\bar{\Pi}_s^l)^2+\frac{1}{2}Q(1-\Gamma \lambda_l^M)^2\Big]\langle v,f_l^M\rangle f_l^M ds\nonumber\\
  =&\frac{1}{2}Q_T \sum_{l=1}^\infty \langle v,f_l^M\rangle f_l^M+\int_t^T\sum_{l=1}^\infty \Big[(2A+2b\lambda_l^M)\bar{\Pi}_s^l-\frac{2B^2}{R}(\bar{\Pi}_s^l)^2+\frac{1}{2}Q(1-\Gamma \lambda_l^M)^2\Big]\langle v,f_l^M\rangle f_l^M ds\nonumber\\
  =&\frac{1}{2}Q_TI_\mathcal{S}v+\int_t^T \Big[2A\sum_{l=1}^\infty \bar{\Pi}_s^l\langle v,f_l^M\rangle f_l^M+ 2bM\sum_{l=1}^\infty \bar{\Pi}_s^l\langle v,f_l^M\rangle f_l^M-\frac{2B^2}{R}\sum_{l=1}^\infty (\bar{\Pi}_s^l)^2\langle v,f_l^M\rangle f_l^M\nonumber\\
  &+\frac{1}{2}Q(I_\mathcal{S}v-2\Gamma Mv+\Gamma^2M^2v)\Big]ds,\quad \forall t\in [0,T],\ v\in L^2[0,1].
\end{align}
From (\ref{Pibot}),
\begin{equation}\label{PibotISbot}
  \Pi_t^\bot I_{\mathcal{S}^\bot}v=\frac{1}{2}Q_TI_{\mathcal{S}^\bot}v+\int_t^T \Big[2A\Pi_s^\bot I_{\mathcal{S}^\bot}v-\frac{2B^2}{R}(\Pi_s^\bot)^2I_{\mathcal{S}^\bot}v+\frac{1}{2}QI_{\mathcal{S}^\bot}v\Big]ds,\quad \forall t\in [0,T],\ v\in L^2[0,1].
\end{equation}
Since
$$(\Pi_t)^2v=\Pi_t(\Pi_tv)=\sum_{l=1}^\infty \bar{\Pi}_t^l \langle \Pi_tv,f_l^M\rangle f_l^M+\Pi_t^\bot I_{\mathcal{S}^\bot}(\Pi_tv)=\sum_{l=1}^\infty (\bar{\Pi}_t^l)^2 \langle v,f_l^M\rangle f_l^M+(\Pi_t^\bot)^2I_{\mathcal{S}^\bot}v,$$
combining (\ref{sumPil}) and (\ref{PibotISbot}) yields
\begin{align}\label{Pitv}
  \Pi_tv & =\sum_{l=1}^\infty \bar{\Pi}_t^l \langle v,f_l^M\rangle f_l^M+\Pi_t^\bot I_{\mathcal{S}^\bot}v \nonumber\\
   & =\frac{1}{2}Q_Tv+\int_t^T \Big[(2AI+2bM)\Pi_s-\frac{2B^2}{R}(\Pi_s)^2+\frac{1}{2}Q(I-\Gamma M)^2\Big]vds,\quad \forall t\in [0,T],\ v\in L^2[0,1].
\end{align}

Next we prove that $\Pi_t$ given by (\ref{decomposedform}) is in $C_{\mathcal{S}}([0,T];\Sigma (L^2[0,1]))$. Indeed, for all $t\in [0,T]$, $v\in L^2[0,1]$, there is a constant $C$ such that
\begin{align*}
   \|\Pi_tv\|^2=\|\Pi_t^\bot Iv+\sum_{l=1}^\infty (\bar{\Pi}_t^l-\Pi_t^\bot)\langle f_l^M,v\rangle f_l^M\|^2
   \leq 2|\Pi_t^\bot|^2\|v\|^2+2\sum_{l=1}^\infty |\bar{\Pi}_t^l-\Pi_t^\bot|^2|\langle f_l^M,v\rangle|^2\leq C\|v\|^2.
\end{align*}
Then $\Pi_t$ is bounded. For any $u, v\in L^2[0,1]$, one has
\begin{align*}
  \langle \Pi_tv,u\rangle & =\Big\langle\Pi_t^\bot Iv+\sum_{l=1}^\infty (\bar{\Pi}_t^l-\Pi_t^\bot)\langle f_l^M,v\rangle f_l^M, u\Big\rangle\\
  & =\langle \Pi_t^\bot Iv,u\rangle+\sum_{l=1}^\infty (\bar{\Pi}_t^l-\Pi_t^\bot)\langle f_l^M,v\rangle \langle f_l^M, u\rangle\\
  & =\langle v,\Pi_t^\bot Iu\rangle+\Big\langle v,\sum_{l=1}^\infty (\bar{\Pi}_t^l-\Pi_t^\bot)\langle f_l^M,u\rangle f_l^M\Big\rangle\\
  & =\langle v,\Pi_tu\rangle.
\end{align*}
Then $\Pi_t$ is self-adjoint. Since $\Pi_t$ satisfies (\ref{Pitv}), one has that $\Pi_t$ is strongly continuous. Then $\Pi_t$ given by (\ref{decomposedform}) is in $C_{\mathcal{S}}([0,T];\Sigma (L^2[0,1]))$. From (\ref{Pitv}) and Proposition 3.3 in \cite{lvqi1}, $\Pi_t\in C_{\mathcal{S}}([0,T];\Sigma (L^2[0,1]))$ is a weak solution of (\ref{Pt}). By Theorem 2.1 and Proposition 2.1 in \cite{Bensoussan2}, $\Pi_t$ is the unique weak solution.
\end{proof}

\begin{remark} A class of operator-valued Riccati equations similar to \eqref{Pt} has been considered by Gao et al. \cite{Gao5}, but the proof method of Proposition \ref{decomposedprop} is quite different from the one used 
in \cite[Th. 1]{Gao5}.
\end{remark}
Obviously, the conclusion in Proposition \ref{decomposedprop} still holds for graphon operator with finite rank.
Let $rank\ M_N=r_N$ and $rank\ Q_N=d_N$ with $1\leq r_N, d_N\leq N$. By Lemma \ref{buchongGaoprop} and Proposition \ref{decomposedprop}, the optimal control (\ref{centralizedcontrol}) can also be presented as
\begin{align}\label{decomposedcontrol}
  \bar{u}_t & =-\frac{2B}{R}\Pi_t^\bot I\bar{y}_t-\frac{2B}{R}\sum_{l=1}^{r_N}(\bar{\Pi}_t^{Nl}-\Pi_t^\bot)\langle \bar{y}_t,f_l^{M^{[N]}}\rangle f_l^{M^{[N]}}\nonumber\\
  & =-\frac{2B}{R}\Pi_t^\bot I\bar{y}_t-\frac{2B}{R}\sqrt{N} \sum_{k=1}^N \Big[\sum_{l=1}^{r_N} (\bar{\Pi}_t^{Nl}-\Pi_t^\bot)\langle \bar{y}_t,f_l^{M^{[N]}}\rangle v_l^{M_N}(k)\Big]\mathds{1}_{P_k},
\end{align}
where $f_l^{M^{[N]}}=\sqrt{N}S_{v_l^{M_N}}=\sqrt{N}\sum_{k=1}^N \mathds{1}_{P_k} v_l^{M_N}(k)$, and $\bar{\Pi}_t^{Nl}$ satisfies
\begin{equation*}
\left\{
  \begin{split}
     & \frac{d}{dt}\bar{\Pi}_t^{Nl}+(2A+\frac{2b}{N}\lambda_l^{M_N})\bar{\Pi}_t^{Nl}-\frac{2B^2}{R}(\bar{\Pi}_t^{Nl})^2+\frac{1}{2}Q(1-\frac{\Gamma}{N}\lambda_l^{M_N})^2=0, \\
      & \bar{\Pi}_T^{Nl}=\frac{1}{2}Q_T.
   \end{split}
   \right.
\end{equation*}
Let $\varphi_t^{Nl}=\langle \bar{y}_t,f_l^{M^{[N]}}\rangle$. Then it follows from (\ref{centralizedstate}) that for each $t\in[0,T]$,
\begin{equation*}
  \langle \bar{y}_t,f_l^{M^{[N]}}\rangle=\langle x_0^{[N]},f_l^{M^{[N]}}\rangle+\int_0^t \Big\langle (AI+bM^{[N]}-\frac{2B^2}{R}\Pi_s^{[N]})\bar{y}_s,f_l^{M^{[N]}}\Big\rangle ds+\sigma\langle W_t^{[N]},f_l^{M^{[N]}}\rangle, \quad a.s.
\end{equation*}
Since
$$
\langle M^{[N]}\bar{y}_t,f_l^{M^{[N]}}\rangle=\lambda_l^{M^{[N]}}\langle \bar{y}_t,f_l^{M^{[N]}}\rangle=\frac{1}{N}\lambda_l^{M_N}\langle \bar{y}_t,f_l^{M^{[N]}}\rangle,
$$
$$
\langle\Pi_t^{[N]}\bar{y}_t,f_l^{M^{[N]}}\rangle=\Big\langle \Pi_t^\bot I\bar{y}_t+\sum_{k=1}^{r_N}(\bar{\Pi}_t^{Nk}-\Pi_t^\bot)\langle \bar{y}_t,f_k^{M^{[N]}}\rangle f_k^{M^{[N]}},f_l^{M^{[N]}}\Big\rangle=\bar{\Pi}_t^{Nl}\langle \bar{y}_t,f_l^{M^{[N]}}\rangle,
$$
and
\begin{align*}
  \langle W_t^{[N]},f_l^{M^{[N]}}\rangle & =\Big\langle \sum_{j=1}^{d_N}\sqrt{\frac{\lambda_j^{Q_N}}{N}}\sqrt{N}S_{v_j^{Q_N}}W_t^j,\sqrt{N}S_{v_l^{M_N}}\Big\rangle=\sum_{j=1}^{d_N}\sqrt{N\lambda_j^{Q_N}}\langle S_{v_j^{Q_N}},S_{v_l^{M_N}}\rangle W_t^j\nonumber\\
  & =\sum_{j=1}^{d_N}\sqrt{N\lambda_j^{Q_N}}\Big\langle \sum_{k=1}^N\mathds{1}_{P_k}v_j^{Q_N}(k),\sum_{k=1}^N\mathds{1}_{P_k}v_l^{M_N}(k)\Big\rangle W_t^j=\sum_{j=1}^{d_N}\sqrt{\frac{\lambda_j^{Q_N}}{N}}\langle v_j^{Q_N},v_l^{M_N}\rangle W_t^j,
\end{align*}
we have for $t\in[0,T]$,
\begin{equation}\label{varphiNl}
  \varphi_t^{Nl}=\langle x_0^{[N]},f_l^{M^{[N]}}\rangle+\int_0^t(A+\frac{b}{N}\lambda_l^{M_N}-\frac{2B^2}{R}\bar{\Pi}_s^{Nl})\varphi_s^{Nl}ds+\sigma \sum_{j=1}^{d_N}\sqrt{\frac{\lambda_j^{Q_N}}{N}}\langle v_j^{Q_N},v_l^{M_N}\rangle W_t^j, \quad a.s.
\end{equation}
and so  (\ref{decomposedcontrol}) has the following modification
\begin{equation}\label{decomposedcontrol1}
  \bar{u}_t=-\frac{2B}{R}\Pi_t^\bot I\bar{y}_t-\frac{2B}{R}\sqrt{N} \sum_{k=1}^N \Big[\sum_{l=1}^{r_N} (\bar{\Pi}_t^{Nl}-\Pi_t^\bot)\breve{\varphi}_t^{Nl} v_l^{M_N}(k)\Big]\mathds{1}_{P_k}.
\end{equation}
where $\breve{\varphi}_t^{Nl}$ is a continuous solution of \eqref{varphiNl}. By a slight abuse of notation, we still use $\bar{u}_t$ to denote the modification of control in (\ref{decomposedcontrol}). We note that $\bar{u}_t$ given in \eqref{decomposedcontrol1} is also an optimal control for Problem (P1b), and the corresponding state satisfies for $t\in[0,T]$
\begin{equation*}
  d\bar{y}_t=\Big\{\Big[\Big(A-\frac{2B^2}{R}\Pi_t^\bot\Big)I+bM^{[N]}\Big]\bar{y}_t-\frac{2B^2}{R}\sqrt{N} \sum_{k=1}^N \Big[\sum_{l=1}^{r_N} (\bar{\Pi}_t^{Nl}-\Pi_t^\bot)\breve{\varphi}_t^{Nl} v_l^{M_N}(k)\Big]\mathds{1}_{P_k}\Big\}dt+\sigma dW_t^{[N]},\quad a.s.
\end{equation*}
where $\sum_{l=1}^{r_N} (\bar{\Pi}_t^{Nl}-\Pi_t^\bot)\breve{\varphi}_t^{Nl} v_l^{M_N}(k)\in L_\mathcal{F}^2(0,T;\mathbb{R})$, $k=1,\cdots,N$. Thus, similar to the previous derivation of \eqref{xt[N]}-\eqref{yt}, we can obtain that $\bar{x}_t^{[N]}\triangleq \sum_{i=1}^N \bar{x}_t^i \mathds{1}_{P_i}$ is a continuous modification of $\bar{y}_t$, where $\bar{x}_t^i$ is the unique solution of the following equation
\begin{equation}\label{centralizedstateith}
  \bar{x}_t^i=x_0^i+\int_0^t \Big[(A-\frac{2B^2}{R}\Pi_s^\bot)\bar{x}_s^i+\frac{b}{N}\sum_{j=1}^N m_{ij}\bar{x}_s^j-\frac{2B^2}{R}\sqrt{N}\sum_{l=1}^{r_N} (\bar{\Pi}_s^{Nl}-\Pi_s^\bot)\breve{\varphi}_s^{Nl} v_l^{M_N}(i)\Big]ds+\sigma \widetilde{W}_t^i.
\end{equation}
It follows from \eqref{decomposedcontrol1}-\eqref{centralizedstateith} that $\bar{u}_t$ has the following modification
\begin{equation}\label{decomposedcontrol2}
  \bar{u}_t=-\frac{2B}{R}\sum_{i=1}^N \Big\{\Pi_t^\bot \bar{x}_t^i+\sqrt{N}\Big[\sum_{l=1}^{r_N} (\bar{\Pi}_t^{Nl}-\Pi_t^\bot)\breve{\varphi}_t^{Nl} v_l^{M_N}(i)\Big]\Big\}\mathds{1}_{P_i},
\end{equation}
which is optimal in $\mathcal{U}$ and $\bar{u}\in\mathcal{U}^{step}$, which means that $\bar{u}$ given by \eqref{decomposedcontrol2} is also the optimal control for Problem (P1a). For convenience, we denote $\bar{u}_t^{[N]}\triangleq \bar{u}_t$. It follows from \eqref{decomposedcontrol2} that
\begin{equation}\label{centralizedcontrolrewrite}
  \bar{u}_t^{[N]}=-\frac{2B}{R}\Pi_t^{[N]}\bar{x}_t^{[N]}.
\end{equation}
By \eqref{decomposedcontrol2} and the equivalence between Problem (P1) and (P1a), the feedback type centralized control for $i$th agent is given as follows
\begin{align}\label{centralizedcontrolith}
  \bar{u}_t^i & =\bar{u}_t^{[N]}(\alpha),\quad \alpha\in P_i\nonumber\\
  & =-\frac{2B}{R}\Big\{\Pi_t^\bot \bar{x}_t^i+\sqrt{N}\Big[\sum_{l=1}^{r_N} (\bar{\Pi}_t^{Nl}-\Pi_t^\bot)\breve{\varphi}_t^{Nl} v_l^{M_N}(i)\Big]\Big\},
\end{align}
where $\bar{x}_t^i$ given by (\ref{centralizedstateith}) is the optimal state. We note that $\bar{u}_t^i$ depends on $M_N$, $Q_N$ and the information of $\mathcal{F}_t$.

\section{Asymptotically optimal decentralized controls}
\qquad The following two assumptions play a key role for designing the asymptotically optimal decentralized controls.
\begin{assumption}\label{assumption1}
There exist a graphon $M$ and $x_0\in L^2[0,1]$ such that, as $N\rightarrow \infty$,
$$
\|M^{[N]}-M\|_{op}\rightarrow 0,\quad \|x_0^{[N]}-x_0\|_2\rightarrow 0.
$$
\end{assumption}
\begin{assumption}\label{assumption2}
There is a $Q$-Wiener process $W_t^Q=\sum_{j=1}^d \sqrt{\lambda_j^Q} W_t^j f_j^Q$ such that, as $N\rightarrow \infty$,
\begin{equation*}
  \sum_{j=1}^d \Big\|\sqrt{\lambda_j^Q}f_j^Q-\sqrt{\lambda_j^{Q_N}}\sum_{i=1}^N v_j^{Q_N}(i) \mathds{1}_{P_i}\Big\|^2\rightarrow 0,\quad \sum_{j=d+1}^{d_N} \Big\|\sqrt{\lambda_j^{Q_N}}\sum_{i=1}^N v_j^{Q_N}(i) \mathds{1}_{P_i}\Big\|^2=\frac{1}{N} \sum_{j=d+1}^{d_N} \lambda_j^{Q_N}\rightarrow 0.
\end{equation*}
\end{assumption}
Define $\mathcal{F}_t^d=\sigma (W_s^k, 0\leq s\leq t, 1\leq k\leq d)\vee\mathcal{N}$ and $\mathcal{U}^Q=\{u|u\in L_{\mathcal{F}^d}^2(0,T;L^2[0,1])\}$. To design asymptotically optimal decentralized controls, we first consider an auxiliary infinite dimensional control problem.
\begin{problem(P2a)}
Find $\tilde{u}\in\mathcal{U}^Q$ such that $\tilde{J}(\tilde{u})=\inf_{u\in\mathcal{U}^Q}\tilde{J}(u)$, where
$$
\tilde{J}(u)=\frac{1}{2}E\int_0^T \big[Q\|(I-\Gamma M)y_t\|^2+R\|u_t\|^2\big]dt+\frac{1}{2}EQ_T\|y_T\|^2,
$$
subject to the following SEE
\begin{equation*}
  dy_t=[(AI+bM)y_t+Bu_t]dt+\sigma dW_t^Q,\quad y_0=x_0.
\end{equation*}
\end{problem(P2a)}
Similar to the proof of Proposition \ref{propcentralized},  we can show that the optimal control law of Problem (P2a) is given  by
\begin{equation}\label{tildeu}
  \tilde{u}_t=-\frac{2B}{R}\Pi_t\tilde{y}_t,
\end{equation}
where $\Pi_t$ satisfies
\begin{equation}\label{Pi}
  \left\{\begin{split}
     &0=\langle \frac{d}{dt}\Pi_tx,x\rangle+2\langle \Pi_tx,(AI+bM)x\rangle-\langle \frac{2B^2}{R}(\Pi_t)^2x,x\rangle+\langle \frac{1}{2}Q(I-\Gamma M)^*(I-\Gamma M)x,x\rangle,\quad x\in L^2[0,1],  \\
      &\Pi_T=\frac{1}{2}Q_TI,
  \end{split}\right.
\end{equation}
and the optimal state is described by
\begin{equation}\label{auxiliarystate}
  d\tilde{y}_t=(AI+bM-\frac{2B^2}{R}\Pi_t)\tilde{y}_tdt+\sigma dW_t^Q,\quad \tilde{y}_0=x_0.
\end{equation}
From Proposition \ref{decomposedprop}, we have
\begin{align}\label{auxiliarycontrol}
  \tilde{u}_t =-\frac{2B}{R}\Pi_t\tilde{y}_t
  =-\frac{2B}{R}\Pi_t^\bot I\tilde{y}_t-\frac{2B}{R}\sum_{l=1}^\infty (\bar{\Pi}_t^l-\Pi_t^\bot)\langle \tilde{y}_t,f_l^M\rangle f_l^M,
\end{align}
where $\{\bar{\Pi}_t^l\}_{l=1}^\infty$, $\Pi_t^\bot$ are real valued Riccati equations satisfy (\ref{Pil}) and (\ref{Pibot}) respectively. By \cite[Th. 7.2]{Prato}, $\tilde{y}$ possesses a continuous modification. In the sequel, we shall always use such a modification for $\tilde{y}$. Denote $\varphi_t^l=\langle \tilde{y}_t,f_l^M\rangle$.
Based on (\ref{auxiliarycontrol}), we can construct the following control for $i$th agent
\begin{equation}\label{constructedcontrol}
  \hat{u}_t^i=-\frac{2B}{R}\Pi_t^\bot\hat{x}_t^i-\frac{2B}{R}N \Big\langle \mathds{1}_{P_i},\sum_{l=1}^\infty (\bar{\Pi}_t^l-\Pi_t^\bot)\varphi_t^lf_l^M\Big\rangle
\end{equation}
and the corresponding state satisfies
\begin{equation*}
  \hat{x}_t^i=x_0^i+\int_0^t \Big[(A-\frac{2B^2}{R}\Pi_s^\bot)\hat{x}_s^i+b\frac{1}{N}\sum_{j=1}^N m_{ij}\hat{x}_s^j-\frac{2B^2}{R}N \Big\langle \mathds{1}_{P_i},\sum_{l=1}^\infty (\bar{\Pi}_s^l-\Pi_s^\bot)\varphi_s^lf_l^M\Big\rangle\Big] ds+\sigma\widetilde{W}_t^i.
\end{equation*}
To verify $\hat{u}^i\in\mathcal{U}_{sd}$, it is enough to prove $\big\langle \mathds{1}_{P_i},\sum_{l=1}^\infty (\bar{\Pi}_t^l-\Pi_t^\bot)\varphi_t^lf_l^M\big\rangle\in L_{\mathcal{F}^d}^2(0,T;\mathbb{R})\subset L_{\mathcal{G}^i}^2(0,T;\mathbb{R})$. Indeed, by the definition of $\varphi_t^l$, one has that $\varphi_t^l$ is $\mathcal{F}^d$-progressively measurable, i.e., $\varphi_s^l$ is $\mathcal{B}([0,t])\otimes\mathcal{F}_t^d$ measurable for each $t\in [0,T]$. Then for each $t\in [0,T]$, there exists a sequence of $\mathcal{B}([0,t])\otimes\mathcal{F}_t^d$-simple functions converge pointwise to $\varphi_s^l$, which implies that there is also a sequence of $\mathcal{B}([0,t])\otimes\mathcal{F}_t^d$-simple functions converge pointwise to $\varphi_s^l f_l^M$. Thus $\varphi_t^l f_l^M$ is $\mathcal{F}^d$-progressively measurable, which indicates that $\big\langle \mathds{1}_{P_i},\sum_{l=1}^n (\bar{\Pi}_t^l-\Pi_t^\bot)\varphi_t^lf_l^M\big\rangle$ is $\mathcal{F}^d$-progressively measurable for each $n\geq 1$. Similar to the proof in Prop \ref{decomposedprop}, it holds that $\lim\limits_{n\rightarrow\infty}\big\langle \mathds{1}_{P_i},\sum_{l=1}^n (\bar{\Pi}_t^l-\Pi_t^\bot)\varphi_t^lf_l^M\big\rangle=\big\langle \mathds{1}_{P_i},\sum_{l=1}^\infty (\bar{\Pi}_t^l-\Pi_t^\bot)\varphi_t^lf_l^M\big\rangle$ pointwise, then $\big\langle \mathds{1}_{P_i},\sum_{l=1}^\infty (\bar{\Pi}_t^l-\Pi_t^\bot)\varphi_t^lf_l^M\big\rangle$ is $\mathcal{F}^d$-progressively measurable. Since there exists a constant $C$ such that
\begin{align*}
 & E\int_0^T \Big|\Big\langle \mathds{1}_{P_i},\sum_{l=1}^\infty (\bar{\Pi}_t^l-\Pi_t^\bot)\varphi_t^lf_l^M\Big\rangle\Big|^2dt \leq E\int_0^T \|\mathds{1}_{P_i}\|^2 \|\sum_{l=1}^\infty (\bar{\Pi}_t^l-\Pi_t^\bot)\varphi_t^lf_l^M\|^2dt\\
  =&\frac{1}{N}E\int_0^T \sum_{l=1}^\infty |\bar{\Pi}_t^l-\Pi_t^\bot|^2|\langle \tilde{y}_t,f_l^M\rangle|^2dt =\frac{C}{N}E\int_0^T \sum_{l=1}^\infty |\langle \tilde{y}_t,f_l^M\rangle|^2dt
  \le \frac{C}{N}E\int_0^T \|\tilde{y}_t\|^2dt<\infty,
\end{align*}
we have $\big\langle \mathds{1}_{P_i},\sum_{l=1}^\infty (\bar{\Pi}_t^l-\Pi_t^\bot)\varphi_t^lf_l^M\big\rangle\in L_{\mathcal{G}^i}^2(0,T;\mathbb{R})$ and so $\hat{u}^i\in\mathcal{U}_{sd}$.

We are now in a position to propose the result of asymptotic optimality of the decentralized controls \eqref{constructedcontrol}.
\begin{theorem}\label{theoremasymptotic}
Let Assumptions \ref{assumption1}, \ref{assumption2} hold. For Problem (P2), the set of controls $\mathbf{\hat{u}}=(\hat{u}^1,\cdots,\hat{u}^N)$ given by (\ref{constructedcontrol}) has asymptotic social optimality, i.e., $
  \Big|\frac{1}{N}J_{soc}^N(\mathbf{\hat{u}})-\frac{1}{N}\inf_{\mathbf{u}\in\mathcal{U}_c}J_{soc}^N(\mathbf{u})\Big|=o(1).
$
\end{theorem}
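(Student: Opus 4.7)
The strategy is to lift everything to the infinite-dimensional framework of Problem (P1a) and pivot through Problem (P2a). Setting $\hat{u}^{[N]}=\sum_{i=1}^N\hat{u}^i\mathds{1}_{P_i}$, the identity \eqref{J(u[N])=1/NJsoc} together with the equivalence of Problems (P1) and (P1a) yields
$$\frac{1}{N}J_{soc}^N(\mathbf{\hat{u}})-\frac{1}{N}\inf_{\mathbf{u}\in\mathcal{U}_c}J_{soc}^N(\mathbf{u})=J(\hat{u}^{[N]})-J(\bar{u}^{[N]})\geq 0.$$
Inserting the value $\tilde{J}(\tilde{u})$ of Problem (P2a) as a pivot, this gap splits as $[J(\hat{u}^{[N]})-\tilde{J}(\tilde{u})]+[\tilde{J}(\tilde{u})-J(\bar{u}^{[N]})]$, and the task reduces to showing each bracket vanishes as $N\to\infty$.

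For the second bracket, I would first pass the Riccati solution $\Pi_t^{[N]}$ to its graphon-limit counterpart $\Pi_t$. Since $\|M^{[N]}-M\|_{op}\to 0$ (Assumption \ref{assumption1}), the nonzero eigenvalues and eigenfunctions of $M^{[N]}$ converge to those of $M$, so each scalar Riccati $\bar{\Pi}_t^{Nl}$ of Proposition \ref{decomposedprop} converges to $\bar{\Pi}_t^l$, while $\Pi_t^\bot$ is independent of $N$. The uniform bound $|\bar{\Pi}_t^l|\leq C$ extracted in the proof of Proposition \ref{decomposedprop}, combined with dominated convergence applied to the series \eqref{decomposedform}, then yields strong convergence $\Pi_t^{[N]}v\to\Pi_t v$ for each $v\in L^2[0,1]$. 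An It\^o energy estimate on the difference between \eqref{centralizedstate} and \eqref{auxiliarystate} followed by Gronwall's inequality gives $E\sup_t\|\bar{y}_t^{[N]}-\tilde{y}_t\|_2^2\to 0$, since $\|M^{[N]}-M\|_{op}$, $\|x_0^{[N]}-x_0\|_2$ and $E\sup_t\|W_t^{[N]}-W_t^Q\|_2^2$ all vanish; using the expansion \eqref{3e}, the last quantity is controlled by $\sum_{j=1}^d\|\sqrt{\lambda_j^Q}f_j^Q-\sqrt{\lambda_j^{Q_N}}\sum_i v_j^{Q_N}(i)\mathds{1}_{P_i}\|^2+\frac{1}{N}\sum_{j=d+1}^{d_N}\lambda_j^{Q_N}\to 0$ by Assumption \ref{assumption2}. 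Substituting into the quadratic cost closes this bracket.

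For the first bracket, observe that the decentralized control \eqref{constructedcontrol} in step-function form reads
$$\hat{u}_t^{[N]}=-\frac{2B}{R}\Pi_t^\bot\hat{x}_t^{[N]}-\frac{2B}{R}\sum_{i=1}^N N\langle\mathds{1}_{P_i},g_t\rangle\mathds{1}_{P_i},\qquad g_t\triangleq\sum_{l=1}^\infty(\bar{\Pi}_t^l-\Pi_t^\bot)\varphi_t^l f_l^M,$$
whose second summand is the orthogonal $L^2[0,1]$-projection of $g_t$ onto $\mathrm{span}\{\mathds{1}_{P_i}\}_{i=1}^N$ and hence converges to $g_t$ in $L^2[0,1]$ by density of step functions. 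Compared with $\tilde{u}_t=-\frac{2B}{R}\Pi_t^\bot\tilde{y}_t-\frac{2B}{R}g_t$ from \eqref{auxiliarycontrol}, subtracting the SEE satisfied by $\hat{x}_t^{[N]}$ from \eqref{auxiliarystate} and applying It\^o's formula together with Gronwall to $\|\hat{x}_t^{[N]}-\tilde{y}_t\|_2^2$ yields $E\sup_t\|\hat{x}_t^{[N]}-\tilde{y}_t\|_2^2\to 0$ once four perturbations are absorbed: the drift gap $b(M^{[N]}-M)\tilde{y}$, the initial-data gap $x_0^{[N]}-x_0$, the noise gap $W^{[N]}-W^Q$, and the control gap between the step-function projection of $g_t$ and $g_t$. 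Expanding the squares in $J(\hat{u}^{[N]})$ and using the uniform second-moment bound on $\tilde{y}$ then delivers $J(\hat{u}^{[N]})-\tilde{J}(\tilde{u})\to 0$.

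The principal obstacle will be this joint Gronwall control of four heterogeneous perturbations (operator, initial datum, diffusion, forcing) inside a single infinite-dimensional SEE; in particular the forcing perturbation involves the infinite series $g_t$, which demands a uniform-in-$N$ second-moment bound on $g_t$ coupled with the $L^2$-rate of the step-function projection onto $\mathrm{span}\{\mathds{1}_{P_i}\}$. A secondary technical hurdle is justifying the dominated-convergence step inside \eqref{decomposedform}, which relies on the eigenpair continuity of $M^{[N]}\to M$ under operator-norm convergence and on the uniform bound $|\bar{\Pi}_t^l|\leq C$ from the proof of Proposition \ref{decomposedprop}.
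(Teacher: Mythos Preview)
Your overall strategy matches the paper's: lift to the infinite-dimensional problem, use (P2a) as a pivot, and show that both the centralized pair $(\bar{x}^{[N]},\bar{u}^{[N]})$ and the decentralized pair $(\hat{x}^{[N]},\hat{u}^{[N]})$ converge to $(\tilde{y},\tilde{u})$ via Gronwall estimates on the respective SEEs, driven by Assumptions \ref{assumption1}--\ref{assumption2}, the step-function projection lemma (your density remark is precisely the paper's Lemma \ref{lemmaapproximate}), and strong convergence of the Riccati operators. The paper organizes the final step slightly differently---it bounds $|J(\hat{u}^{[N]})-J(\bar{u}^{[N]})|$ directly term by term rather than inserting $\tilde{J}(\tilde{u})$ as an intermediate value---but the analytic ingredients (Lemmas \ref{lemmabounded}--\ref{lemma6}) are identical to the ones you outline.

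One point deserves tightening. Your proposed route to $\Pi_t^{[N]}v\to\Pi_t v$ via eigenpair continuity plus dominated convergence on the spectral series \eqref{decomposedform} is not a standard DCT situation: the orthonormal system $\{f_l^{M^{[N]}}\}_{l=1}^{r_N}$ changes with $N$, $M^{[N]}$ has finite rank while $M$ may not, and eigenfunction convergence under $\|M^{[N]}-M\|_{op}\to 0$ is delicate when eigenvalues cluster or have multiplicity. So the ``series'' you want to pass to the limit has both a moving index set and moving summands. The paper avoids this entirely by invoking a Riccati stability theorem (\cite[Th.~9]{Gao1}) which gives $\Pi^{[N]}\to\Pi$ in $C_\mathcal{S}([0,T];\Sigma(L^2[0,1]))$ directly from operator-norm convergence of the coefficients in \eqref{Pt}. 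You should either cite the same, or prove stability directly from the integral form of \eqref{Pt} using $\|M^{[N]}-M\|_{op}\to 0$ and the uniform bound of Lemma \ref{lemmabounded}, rather than through the spectral decomposition.
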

To prove Theorem \ref{theoremasymptotic}, we first provide a transformation for social cost $J_{soc}^N(\mathbf{\hat{u}})$. Let $\hat{x}_t^{[N]}=\sum_{i=1}^N \mathds{1}_{P_i}\hat{x}_t^i$. Since $\big\langle \mathds{1}_{P_i},\sum_{l=1}^\infty (\bar{\Pi}_t^l-\Pi_t^\bot)\varphi_t^lf_l^M\big\rangle\in L_{\mathcal{G}^i}^2(0,T;\mathbb{R})\subset L_\mathcal{F}^2(0,T;\mathbb{R})$, one has $\sum_{i=1}^N \mathds{1}_{P_i}\big\langle \mathds{1}_{P_i},\sum_{l=1}^\infty (\bar{\Pi}_t^l-\Pi_t^\bot)\varphi_t^lf_l^M\big\rangle\in L_\mathcal{F}^2(0,T;L^2[0,1])$. Similar to the discussion among (\ref{vectorSDE})-(\ref{J(u[N])=1/NJsoc}), we derive that $\hat{x}_t^{[N]}$ is the unique strong solution of the following SEE
\begin{equation}\label{y_t}
  y_t=x_0^{[N]}+\int_0^t \Big\{\Big[(A-\frac{2B^2}{R}\Pi_s^\bot)I+bM^{[N]}\Big]y_s-\frac{2B^2}{R}N\sum_{i=1}^N\mathds{1}_{P_i}\Big\langle \mathds{1}_{P_i},\sum_{l=1}^\infty (\bar{\Pi}_s^l-\Pi_s^\bot)\varphi_s^lf_l^M\Big\rangle\Big\}ds+\sigma W_t^{[N]}
\end{equation}
and
\begin{align}\label{J(hatu[N])=1/NJsoc}
  \frac{1}{N}J_{soc}^N(\mathbf{\hat{u}}) & =\frac{1}{N}\sum_{i=1}^N\Big\{\frac{1}{2}E\int_0^T\Big[Q\Big(\hat{x}_t^i-\Gamma\frac{1}{N}\sum\limits_{j=1}^N m_{ij}\hat{x}_t^j\Big)^2+R(\hat{u}_t^i)^2\Big]dt+\frac{1}{2}EQ_T|\hat{x}_T^i|^2\Big\}\nonumber\\
  & =\frac{1}{2}E\int_0^T \big[Q\|(I-\Gamma M^{[N]})\hat{x}_t^{[N]}\|_2^2+R\|\hat{u}_t^{[N]}\|_2^2\big]dt+\frac{1}{2}EQ_T\|\hat{x}_T^{[N]}\|_2^2
\end{align}
where
\begin{equation}\label{hatuN}
  \hat{u}_t^{[N]}=\sum_{i=1}^N \mathds{1}_{P_i}\hat{u}_t^i=-\frac{2B}{R}\Pi_t^\bot I\hat{x}_t^{[N]}-\frac{2B}{R}N\sum_{i=1}^N\mathds{1}_{P_i}\big\langle \mathds{1}_{P_i},\sum_{l=1}^\infty (\bar{\Pi}_t^l-\Pi_t^\bot)\varphi_t^lf_l^M\big\rangle.
\end{equation}

Next, we propose several Lemmas before we prove Theorem \ref{theoremasymptotic}.
\begin{lemma}\label{lemmabounded}
There exists a constant $C$, such that $\sup\limits_{1\leq N<\infty} \sup\limits_{t\in[0,T]}\|\mathds{A}_t^{[N]}\|_{op}^2\leq C$, where $\mathds{A}_t^{[N]}=AI-\frac{2B^2}{R}\Pi_t^{[N]}+bM^{[N]}$.
\end{lemma}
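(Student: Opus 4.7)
The plan is to bound $\|\mathds{A}_t^{[N]}\|_{op}$ term by term via the triangle inequality,
\[
\|\mathds{A}_t^{[N]}\|_{op} \le |A| + |b|\,\|M^{[N]}\|_{op} + \tfrac{2B^2}{R}\|\Pi_t^{[N]}\|_{op},
\]
and show that each piece is bounded uniformly in $(N,t)$. The first term is trivial. For the second, since the graphon $M^{[N]}$ takes values in $[-1,1]$, the eigenvalues of the graphon operator $M^{[N]}$ satisfy $|\lambda_l^{M^{[N]}}|=|\tfrac{1}{N}\lambda_l^{M_N}|\le 1$ by $\cite{Sauvigny}$ and $\cite{Gao1}$ (as already invoked in the proof of Proposition \ref{decomposedprop}), hence $\|M^{[N]}\|_{op}\le 1$ uniformly in $N$. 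The real work is therefore to obtain a uniform operator bound on $\Pi_t^{[N]}$.

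For this, I would use the decomposed form provided by Proposition \ref{decomposedprop} applied to $M=M^{[N]}$:
\[
\Pi_t^{[N]} v = \Pi_t^\bot I v + \sum_{l=1}^{r_N}\bigl(\bar{\Pi}_t^{Nl}-\Pi_t^\bot\bigr)\langle f_l^{M^{[N]}},v\rangle f_l^{M^{[N]}}.
\]
Because the eigenfunctions $\{f_l^{M^{[N]}}\}_{l=1}^{r_N}$ are orthonormal, Bessel's inequality and the elementary estimate $(a+b)^2\le 2a^2+2b^2$ give
\[
\|\Pi_t^{[N]}v\|_2^2 \le 2|\Pi_t^\bot|^2\|v\|_2^2 + 2\Bigl(\max_{1\le l\le r_N}|\bar{\Pi}_t^{Nl}-\Pi_t^\bot|\Bigr)^2\|v\|_2^2,
\]
so it suffices to bound $|\Pi_t^\bot|$ and $|\bar{\Pi}_t^{Nl}|$ uniformly in $N$, $l$, and $t\in[0,T]$.

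Both follow from the same comparison argument already used in the proof of Proposition \ref{decomposedprop}. For $\Pi_t^\bot$, the scalar Riccati equation \eqref{Pibot} is compared against its linearized version (drop the $-\tfrac{2B^2}{R}(\Pi_t^\bot)^2$ term), whose solution is explicit and bounded on $[0,T]$ by a constant depending only on $|A|,Q,Q_T$. By Theorem 3.5 of \cite{Freiling} (nonnegativity plus comparison), $0\le \Pi_t^\bot\le C_1$. For $\bar{\Pi}_t^{Nl}$ satisfying \eqref{Pil} with coefficients $2A+\tfrac{2b}{N}\lambda_l^{M_N}$ and $\tfrac{1}{2}Q(1-\tfrac{\Gamma}{N}\lambda_l^{M_N})^2$, the key observation is that $|\tfrac{1}{N}\lambda_l^{M_N}|\le 1$, so these coefficients are bounded uniformly in $N$ and $l$ by constants depending only on $A,b,\Gamma,Q$. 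The same linearization–comparison trick then yields a uniform bound $\sup_{N,l}\sup_{t\in[0,T]}|\bar{\Pi}_t^{Nl}|\le C_2$. Combining the three estimates produces a constant $C$ independent of $(N,t)$ with $\|\mathds{A}_t^{[N]}\|_{op}^2\le C$.

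The main obstacle is establishing uniformity in both $N$ and $l$ for $\bar{\Pi}_t^{Nl}$; everything hinges on the graphon eigenvalue bound $|\tfrac{1}{N}\lambda_l^{M_N}|\le 1$, without which the linearized ODE coefficients would blow up and the Freiling comparison would only give an $N$-dependent constant. Once that uniform bound is in hand, the rest is the bookkeeping above.
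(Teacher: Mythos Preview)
Your argument is correct. The paper itself does not give a proof of this lemma; it simply writes ``The proof is similar to the one of \cite[Th.~10]{Gao1}, so we omit it,'' so there is no in-paper proof to compare against line by line. Your route---triangle inequality, the graphon eigenvalue bound $|\lambda_l^{M^{[N]}}|\le 1$, and then the spectral decomposition of $\Pi_t^{[N]}$ from Proposition~\ref{decomposedprop} reduced to uniform scalar Riccati bounds via the Freiling comparison---is exactly the natural way to make the omitted argument self-contained, and it recycles precisely the ingredients the paper has already set up (in particular the uniform bound $0\le \bar\Pi_t^l\le \tilde\Pi_t^l\le C$ established in the proof of Proposition~\ref{decomposedprop}). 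There is no gap: the only nontrivial point is the uniformity in $(N,l)$, and you correctly isolate that it rests entirely on $|\tfrac{1}{N}\lambda_l^{M_N}|\le 1$.
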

\begin{proof}
The proof is similar to the one of \cite[Th. 10]{Gao1}, so we omit it.
\end{proof}

\begin{lemma}\label{lemmaapproximate}
For all $\phi\in L^2[0,1]$, one has $\Big\|N\sum\limits_{i=1}^N\langle \phi,\mathds{1}_{P_i}\rangle\mathds{1}_{P_i}-\phi\Big\|_2\rightarrow 0$ as $N\rightarrow\infty$.
\end{lemma}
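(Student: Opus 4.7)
The plan is to recognize the map $T_N : L^2[0,1] \to L^2[0,1]$ defined by $T_N\phi := N\sum_{i=1}^N \langle \phi,\mathds{1}_{P_i}\rangle \mathds{1}_{P_i}$ as an orthogonal projection and then to combine uniform continuity on a dense subspace with contractivity of $T_N$. Concretely, since the $P_i$'s are disjoint with $|P_i| = 1/N$, the family $\{\sqrt{N}\,\mathds{1}_{P_i}\}_{i=1}^N$ is orthonormal, and rewriting $T_N\phi = \sum_{i=1}^N \langle \phi,\sqrt{N}\mathds{1}_{P_i}\rangle \sqrt{N}\mathds{1}_{P_i}$ exhibits $T_N$ as the orthogonal projection of $L^2[0,1]$ onto $V_N:=\mathrm{span}\{\mathds{1}_{P_i}:1\le i\le N\}$; equivalently, on each $P_j$ the function $T_N\phi$ equals the average $\tfrac{1}{|P_j|}\int_{P_j}\phi$. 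In particular $\|T_N\|_{op}\le 1$.

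The first step is to verify the claim for continuous $\phi$. If $\phi\in C[0,1]$, let $\omega(\delta):=\sup_{|\alpha-\beta|\le \delta}|\phi(\alpha)-\phi(\beta)|$. For $\alpha\in P_j$,
\begin{equation*}
|(T_N\phi)(\alpha)-\phi(\alpha)| = \Bigl|N\!\int_{P_j}\bigl(\phi(\beta)-\phi(\alpha)\bigr)d\beta\Bigr| \le \omega(1/N),
\end{equation*}
so $\|T_N\phi-\phi\|_\infty\le \omega(1/N)\to 0$ by uniform continuity, which gives $\|T_N\phi-\phi\|_2\to 0$.

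The second step extends to arbitrary $\phi\in L^2[0,1]$ by a standard three-$\varepsilon$ argument using density of $C[0,1]$ in $L^2[0,1]$. Given $\varepsilon>0$, pick $\psi\in C[0,1]$ with $\|\phi-\psi\|_2<\varepsilon/3$. Then
\begin{equation*}
\|T_N\phi-\phi\|_2 \le \|T_N(\phi-\psi)\|_2 + \|T_N\psi-\psi\|_2 + \|\psi-\phi\|_2 \le 2\|\phi-\psi\|_2 + \|T_N\psi-\psi\|_2,
\end{equation*}
where the contraction bound $\|T_N\|_{op}\le 1$ was used. By the first step the last term is less than $\varepsilon/3$ for $N$ sufficiently large, and the claim follows.

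There is essentially no serious obstacle here: the whole argument hinges on the observation that $T_N$ is the conditional-expectation/averaging projection onto the uniform partition, after which contractivity plus a density argument is routine. The only point that requires a little care is justifying $\|T_N\|_{op}\le 1$, but this is immediate from the orthonormality of $\{\sqrt N\,\mathds{1}_{P_i}\}_{i=1}^N$ and the Pythagoras-type identity $\|T_N\phi\|_2^2 = \sum_{i=1}^N |\langle \phi,\sqrt{N}\mathds{1}_{P_i}\rangle|^2 \le \|\phi\|_2^2$.
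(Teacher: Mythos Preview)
Your proof is correct and follows essentially the same strategy as the paper: establish the uniform bound $\|T_N\|_{op}\le 1$, verify convergence for $\phi\in C[0,1]$ via uniform continuity, and then pass to general $\phi\in L^2[0,1]$ by density with a three-term estimate. The only cosmetic difference is that you obtain the contractivity bound by recognizing $T_N$ as an orthogonal projection onto $\mathrm{span}\{\mathds{1}_{P_i}\}$, whereas the paper derives it directly via Cauchy--Schwarz; both arguments are equivalent and the overall structure is identical.
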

\begin{proof}
Define a linear operator $\Upsilon_N$ on $L^2[0,1]$ by letting $\Upsilon_N \phi=N\sum_{i=1}^N\langle \phi,\mathds{1}_{P_i}\rangle\mathds{1}_{P_i}$, $\phi\in L^2[0,1]$. Now, we first show that $\Upsilon_N$ is bounded uniformly. It holds that
\begin{align*}
  \|\Upsilon_N \phi\|_2^2 & =N^2\sum_{i=1}^N|\langle \mathds{1}_{P_i},\phi\rangle|^2\|\mathds{1}_{P_i}\|_2^2 =N\sum_{i=1}^N|\langle \mathds{1}_{P_i},\phi\rangle|^2=N\sum_{i=1}^N\Big|\int_{P_i}\phi(\alpha)d\alpha\Big|^2\\
  &\leq N\sum_{i=1}^N\int_{P_i}1d\alpha \int_{P_i}|\phi(\alpha)|^2d\alpha=\int_0^1 |\phi(\alpha)|^2d\alpha=\|\phi\|_2^2,
\end{align*}
and so $\|\Upsilon_N\|_{op}\leq 1$ for all $N$. Next, we deduce that the result is true for $\phi\in C[0,1]$. Indeed, for each $\phi\in C[0,1]$, we have
\begin{align*}
  \|\Upsilon_N\phi-\phi\|_2^2 &=\Big\|N\sum_{i=1}^N\langle \phi,\mathds{1}_{P_i}\rangle\mathds{1}_{P_i}-\phi\Big\|_2^2\\
  & =\int_0^1\Big|\sum_{i=1}^N \mathds{1}_{P_i}(\theta)\Big(\phi(\theta)-N\int_{P_i}\phi(\alpha)d\alpha\Big)\Big|^2d\theta\\
  & =\int_0^1\sum_{i=1}^N \mathds{1}_{P_i}(\theta)\Big|\phi(\theta)-N\int_{P_i}\phi(\alpha)d\alpha\Big|^2d\theta\\
  & =\int_0^1\sum_{i=1}^N \mathds{1}_{P_i}(\theta)\Big|\phi(\theta)-\phi(\xi_i)\Big|^2d\theta,\quad \xi_i\in P_i\\
  & =\sum_{i=1}^N\int_{P_i}\Big|\phi(\theta)-\phi(\xi_i)\Big|^2d\theta\\
  & =\frac{1}{N}\sum_{i=1}^N\Big|\phi(\zeta_i)-\phi(\xi_i)\Big|^2,\quad \zeta_i\in P_i.
\end{align*}
Since $\phi\in C[0,1]$, $\phi(\cdot)$ is uniformly continuous on $[0,T]$. Thus, $\forall \varepsilon>0$, $\exists N_0$, such that for $N\geq N_0$, one has $\frac{1}{N}\sum_{i=1}^N\Big|\phi(\zeta_i)-\phi(\xi_i)\Big|^2<\varepsilon$. Then, for each $\phi\in C[0,1]$, $\|\Upsilon_N\phi-\phi\|_2\rightarrow 0$, as $N\rightarrow\infty$.

Since $C[0,1]$ is dense in $L^2[0,1]$, for each $\phi\in L^2[0,1]$, there exist a sequence $\{\phi_k\}\subset C[0,1]$ such that $\|\phi_k-\phi\|_2\rightarrow 0$. Then, it follows from the uniform boundedness of $\Upsilon_N$ that
\begin{align*}
  \|\Upsilon_N\phi-\phi\|_2 &=\|\Upsilon_N\phi-\Upsilon_N\phi_k+\Upsilon_N\phi_k-\phi_k+\phi_k-\phi\|_2\\
  &\leq \|\Upsilon_N(\phi-\phi_k)\|_2+\|\Upsilon_N\phi_k-\phi_k\|_2+\|\phi_k-\phi\|_2\\
  &\leq 2\|\phi_k-\phi\|_2+\|\Upsilon_N\phi_k-\phi_k\|_2,\quad k\geq 1.
\end{align*}
For any $\epsilon>0$, we can find $k_0\geq 1$ such that $\|\phi_{k_0}-\phi\|_2<\frac{\epsilon}{4}$. Moreover, $\exists N_1$, for $N\geq N_1$, one has $\|\Upsilon_N\phi_{k_0}-\phi_{k_0}\|_2<\frac{\epsilon}{2}$. Therefore, it holds that $\|\Upsilon_N\phi-\phi\|_2\leq 2\|\phi_{k_0}-\phi\|_2+\|\Upsilon_N\phi_{k_0}-\phi_{k_0}\|_2<\epsilon$. This completes the proof.
\end{proof}

\begin{lemma}\label{lemma3}
There is a constant $C$, such that
$$
\lim_{N\rightarrow\infty}\sup_{t\in [0,T]}E\|\tilde{y}_t-\bar{x}_t^{[N]}\|^2=0, \quad \sup\limits_{1\leq N<\infty} \sup\limits_{t\in[0,T]}E\|\bar{x}_t^{[N]}\|^2\leq C.
$$
\end{lemma}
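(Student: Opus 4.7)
The plan is to prove both assertions by standard energy estimates on linear $L^2[0,1]$-valued stochastic evolution equations, with the convergence part reduced to a Gronwall argument that sequentially absorbs the initial-data mismatch, the drift-operator mismatch, and the noise mismatch.

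\emph{Uniform boundedness.} Since $\bar x_t^{[N]}$ is a continuous modification of the solution $\bar y_t$ of \eqref{centralizedstate}, I would apply It\^o's formula in $L^2[0,1]$ to $\|\bar y_t\|^2$, take expectation, and invoke Lemma \ref{lemmabounded} to bound $\|\mathds A_s^{[N]}\|_{op}$ uniformly in $N$ and $s$. The stochastic integral has zero mean, while the quadratic-variation term equals $\sigma^2 t\,\mathrm{tr}(Q^{[N]})=\sigma^2 t\cdot\tfrac{1}{N}\mathrm{tr}(Q_N)=\sigma^2 t$ because $Q_N$ has unit diagonal. Assumption \ref{assumption1} bounds $\|x_0^{[N]}\|_2$ uniformly in $N$, so Gronwall's inequality yields the claimed constant $C$.

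\emph{Convergence.} Setting $z_t^N:=\tilde y_t-\bar x_t^{[N]}$ and subtracting \eqref{centralizedstate} from \eqref{auxiliarystate}, I would write
\begin{equation*}
z_t^N = (x_0-x_0^{[N]}) + \int_0^t \tilde{\mathds A}_s z_s^N\,ds + \int_0^t \eta_s^N\,ds + \sigma\bigl(W_t^Q-W_t^{[N]}\bigr),
\end{equation*}
where $\tilde{\mathds A}_s:=AI+bM-\tfrac{2B^2}{R}\Pi_s$ is uniformly bounded in operator norm (same reasoning as Lemma \ref{lemmabounded}) and
\begin{equation*}
\eta_s^N := b(M-M^{[N]})\bar x_s^{[N]} + \tfrac{2B^2}{R}\bigl(\Pi_s^{[N]}-\Pi_s\bigr)\bar x_s^{[N]}.
\end{equation*}
Taking squared $L^2[0,1]$ norms, taking expectation, applying Cauchy--Schwarz to the Bochner integrals and Doob's inequality to the $L^2[0,1]$-valued martingale $W^Q-W^{[N]}$ gives, after Gronwall,
\begin{equation*}
\sup_{t\le T} E\|z_t^N\|^2 \le C\Bigl(\|x_0-x_0^{[N]}\|_2^2 + \int_0^T E\|\eta_s^N\|^2\,ds + E\sup_{t\le T}\|W_t^Q-W_t^{[N]}\|^2\Bigr).
\end{equation*}
The initial-condition term vanishes by Assumption \ref{assumption1}, and the noise term vanishes by expanding $W^Q$ and $W^{[N]}$ in their respective eigen-bases (see \eqref{3e}), computing $E\|W_t^Q-W_t^{[N]}\|^2$ via It\^o isometry into a sum of squared differences of the eigen-pairs plus a tail over $j>d$, and invoking Assumption \ref{assumption2}.

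\emph{Main obstacle.} The delicate term is $\int_0^T E\|\eta_s^N\|^2\,ds$, and specifically the Riccati-difference piece $(\Pi_s^{[N]}-\Pi_s)\bar x_s^{[N]}$; the graphon piece $(M-M^{[N]})\bar x_s^{[N]}$ is immediate from $\|M-M^{[N]}\|_{op}\to 0$ combined with the first assertion. For the Riccati difference I would use Proposition \ref{decomposedprop} to decompose
\begin{equation*}
\Pi_s^{[N]}-\Pi_s = \sum_{l=1}^{r_N}(\bar\Pi_s^{Nl}-\Pi_s^\bot)\,(f_l^{M^{[N]}}\otimes f_l^{M^{[N]}}) - \sum_{l=1}^{\infty}(\bar\Pi_s^l-\Pi_s^\bot)\,(f_l^M\otimes f_l^M),
\end{equation*}
and establish uniform-in-$s$ convergence to zero when applied to the uniformly $L^2(\Omega;L^2[0,1])$-bounded family $\bar x_s^{[N]}$ by combining: (i) spectral perturbation for the compact self-adjoint operators $M^{[N]}\to M$, giving convergence of each nonzero eigenvalue $\lambda_l^{M^{[N]}}\to\lambda_l^M$ together with norm-convergence of the associated finite-dimensional spectral projections; (ii) continuous dependence of the scalar Riccati $\bar\Pi_s^l$ on its parameter $\lambda_l^M$, uniformly for $s\in[0,T]$, so that $\bar\Pi_s^{Nl}\to\bar\Pi_s^l$ uniformly; and (iii) a tail estimate using the uniform bound $|\bar\Pi_s^l-\Pi_s^\bot|\le C$ from Proposition \ref{decomposedprop} together with Bessel's inequality applied to $\bar x_s^{[N]}$ to kill high-frequency modes. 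This would give $E\|(\Pi_s^{[N]}-\Pi_s)\bar x_s^{[N]}\|^2\to 0$ uniformly in $s\in[0,T]$ and close the Gronwall loop.
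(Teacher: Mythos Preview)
Your overall architecture (Gronwall on the difference, splitting into initial-data, drift-operator, and noise mismatches) matches the paper's proof. The uniform-boundedness part via It\^o's formula is fine and in fact cleaner than the paper, which instead deduces $\sup_{N,t}E\|\bar x_t^{[N]}\|^2<\infty$ \emph{a posteriori} from the convergence statement and the bound on $\tilde y$.

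The substantive difference is your choice of decomposition of the drift. You write
\[
\mathds{A}_s\tilde y_s-\mathds{A}_s^{[N]}\bar x_s^{[N]}=\tilde{\mathds A}_s z_s^N+\eta_s^N,\qquad \eta_s^N=b(M-M^{[N]})\bar x_s^{[N]}+\tfrac{2B^2}{R}(\Pi_s^{[N]}-\Pi_s)\bar x_s^{[N]},
\]
so the operator mismatch acts on the \emph{$N$-dependent} process $\bar x_s^{[N]}$. The paper does the opposite split,
\[
\mathds{A}_s\tilde y_s-\mathds{A}_s^{[N]}\bar x_s^{[N]}=(\mathds{A}_s-\mathds{A}_s^{[N]})\tilde y_s+\mathds{A}_s^{[N]}z_s^N,
\]
so the mismatch acts on the \emph{fixed} process $\tilde y_s$. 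This matters because the available convergence $\Pi^{[N]}\to\Pi$ (from \cite[Th.~9]{Gao1}) is only in the \emph{strong} operator topology, $\sup_t\|(\Pi_t^{[N]}-\Pi_t)v\|\to0$ for each fixed $v$, and DCT then immediately gives $\int_0^TE\|(\Pi_s^{[N]}-\Pi_s)\tilde y_s\|^2ds\to0$. With your split you need $\int_0^TE\|(\Pi_s^{[N]}-\Pi_s)\bar x_s^{[N]}\|^2ds\to0$, and strong convergence is not enough because $\bar x_s^{[N]}$ moves with $N$.

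Your proposed remedy for this---``the uniform bound $|\bar\Pi_s^l-\Pi_s^\bot|\le C$ together with Bessel's inequality applied to $\bar x_s^{[N]}$''---does not close the gap as stated. Boundedness plus Bessel only yields
\[
\Bigl\|\sum_{l>L}(\bar\Pi_s^l-\Pi_s^\bot)\langle f_l^M,\bar x_s^{[N]}\rangle f_l^M\Bigr\|^2\le C\sum_{l>L}|\langle f_l^M,\bar x_s^{[N]}\rangle|^2,
\]
and the right-hand side does \emph{not} vanish uniformly over the merely $L^2$-bounded family $\{\bar x_s^{[N]}\}$. What would work instead is to use that $\lambda_l^M\to0$ (and $\lambda_l^{M^{[N]}}\to0$ uniformly in $N$, from the Hilbert--Schmidt bound) together with the continuous dependence of the scalar Riccati on $\lambda$ to get $|\bar\Pi_s^l-\Pi_s^\bot|\to0$ as $l\to\infty$, uniformly in $s$ and $N$; combined with Bessel this gives a genuine small tail. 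With that fix plus your points (i)--(ii), one effectively proves $\sup_t\|\Pi_t^{[N]}-\Pi_t\|_{op}\to0$, which is stronger than the paper needs and requires more spectral bookkeeping (in particular care with eigenvalue multiplicities in (i)). The paper's decomposition sidesteps all of this: swap the roles of $\tilde y_s$ and $\bar x_s^{[N]}$, cite the strong convergence of $\Pi^{[N]}$ to $\Pi$, and finish with DCT.
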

\begin{proof}
Denote $\mathds{A}_t\triangleq AI-\frac{2B^2}{R}\Pi_t+bM$ and $\mathds{A}_t^{[N]}\triangleq AI-\frac{2B^2}{R}\Pi_t^{[N]}+bM^{[N]}$. From (\ref{centralizedstate}) and (\ref{auxiliarystate}), one has
\begin{equation*}
  \tilde{y}_t-\bar{x}_t^{[N]}=x_0-x_0^{[N]}+\int_0^t (\mathds{A}_s\tilde{y}_s-\mathds{A}_s^{[N]}\bar{x}_s^{[N]})ds+\sigma(W_t^Q-W_t^{[N]}).
\end{equation*}
Applying the triangle inequality leads to
\begin{align*}
  \|\tilde{y}_t-\bar{x}_t^{[N]}\|^2 & \leq 3\|x_0-x_0^{[N]}\|^2+3t\int_0^t\|\mathds{A}_s\tilde{y}_s-\mathds{A}_s^{[N]}\bar{x}_s^{[N]}\|^2ds+3\sigma^2\|W_t^Q-W_t^{[N]}\|^2\nonumber\\
  & \leq 3\|x_0-x_0^{[N]}\|^2+6t\int_0^t\big[\|(\mathds{A}_s-\mathds{A}_s^{[N]})\tilde{y}_s\|^2+\|\mathds{A}_s^{[N]}\|_{op}^2\|\tilde{y}_s-\bar{x}_s^{[N]}\|^2\big]ds+3\sigma^2\|W_t^Q-W_t^{[N]}\|^2\nonumber\\
  & \leq 3\|x_0-x_0^{[N]}\|^2+6T\int_0^T \|(\mathds{A}_s-\mathds{A}_s^{[N]})\tilde{y}_s\|^2ds\nonumber\\
  &\quad+6T\left(\sup\limits_{1\leq N<\infty} \sup\limits_{s\in[0,T]}\|\mathds{A}_s^{[N]}\|_{op}^2\right)\int_0^t\|\tilde{y}_s-\bar{x}_s^{[N]}\|^2ds
  +3\sigma^2\|W_t^Q-W_t^{[N]}\|^2.
\end{align*}
Since $E\|\tilde{y}_t-\bar{x}_t^{[N]}\|^2\leq 2\sup_{t\in[0,T]}E\|\tilde{y}_t\|^2+2\sup_{t\in[0,T]}E\|\bar{x}_t^{[N]}\|^2$, one has that $E\|\tilde{y}_t-\bar{x}_t^{[N]}\|^2$ is bounded on $[0,T]$ for each $N$. Then, taking expectation on both sides and applying Gronwall's inequality, we have
\begin{align}\label{22}
  E\|\tilde{y}_t-\bar{x}_t^{[N]}\|^2\leq &\Big(3\|x_0-x_0^{[N]}\|^2+6TE\int_0^T \|(\mathds{A}_s-\mathds{A}_s^{[N]})\tilde{y}_s\|^2ds\Big.\nonumber\\
  &\Big.+3\sigma^2\sup_{t\in[0,T]}E\|W_t^Q-W_t^{[N]}\|^2\Big)\exp\big(6T^2\sup\limits_{1\leq N<\infty} \sup\limits_{t\in[0,T]}\|\mathds{A}_t^{[N]}\|_{op}^2\big).
\end{align}
Since
\begin{align*}
  E\|W_t^{[N]}-W_t^Q\|^2 & =E\Big\|\sum_{j=1}^d \Big(\sqrt{\lambda_j^Q}f_j^Q-\sqrt{\lambda_j^{Q_N}}\sum_{i=1}^N v_j^{Q_N}(i) \mathds{1}_{P_i}\Big)W_t^j+\sum_{j=d+1}^{d_N} \sqrt{\lambda_j^{Q_N}}\sum_{i=1}^N v_j^{Q_N}(i) \mathds{1}_{P_i}W_t^j\Big\|^2\\
  & =\sum_{j=1}^d\Big\|\sqrt{\lambda_j^Q}f_j^Q-\sqrt{\lambda_j^{Q_N}}\sum_{i=1}^N v_j^{Q_N}(i) \mathds{1}_{P_i}\Big\|^2t+\sum_{j=d+1}^{d_N}\Big\|\sqrt{\lambda_j^{Q_N}}\sum_{i=1}^N v_j^{Q_N}(i) \mathds{1}_{P_i}\Big\|^2t,
\end{align*}
by Assumption \ref{assumption2}, we obtain
\begin{equation}\label{23}
  \sup_{t\in[0,T]}E\|W_t^{[N]}-W_t^Q\|^2\rightarrow 0,\quad as\ N\rightarrow\infty.
\end{equation}
Moreover,
\begin{align}\label{23a}
  E\int_0^T\|(\mathds{A}_s-\mathds{A}_s^{[N]})\tilde{y}_s\|^2ds\leq \frac{8B^4}{R^2} E\int_0^T \|(\Pi_s-\Pi_s^{[N]})\tilde{y}_s\|^2ds+2b^2\|M-M^{[N]}\|_{op}^2 E\int_0^T \|\tilde{y}_s\|^2ds.
\end{align}
By \cite[Th.9]{Gao1}, we can obtain $\Pi^{[N]}\rightarrow\Pi$ in $C_\mathcal{S}([0,T];\Sigma(L^2[0,1]))$, i.e.,
\begin{equation}\label{stronglyconverge}
  \lim_{N\rightarrow\infty}\sup_{t\in[0,T]}\|\Pi_t^{[N]}v-\Pi_tv\|=0,\quad \forall v\in L^2[0,1].
\end{equation}
Since $\|(\Pi_s^{[N]}-\Pi_s)\tilde{y}_s\|\leq \sup_{t\in[0,T]}\|(\Pi_t^{[N]}-\Pi_t)\tilde{y}_s\|$ holds for all $s$ and $\omega$, it follows from (\ref{stronglyconverge}) that, for all $s$ and $\omega$,
\begin{equation}\label{stronglyconverge1}
  \lim_{N\rightarrow\infty}\|(\Pi_s^{[N]}-\Pi_s)\tilde{y}_s\|\leq \lim_{N\rightarrow\infty}\sup_{t\in[0,T]}\|(\Pi_t^{[N]}-\Pi_t)\tilde{y}_s\|=0.
\end{equation}
By Lemma \ref{lemmabounded}, there exists a constant $C$ such that, for all $s$ and $\omega$,
\begin{equation}\label{dominated1}
  \|(\Pi_s^{[N]}-\Pi_s)\tilde{y}_s\|^2\leq \|\Pi_s^{[N]}-\Pi_s\|_{op}^2\|\tilde{y}_s\|^2\leq C\|\tilde{y}_s\|^2.
\end{equation}
This, combined with (\ref{stronglyconverge1}) and the Dominated Convergence Theorem, yields that
\begin{equation}\label{23b}
  \lim_{N\rightarrow\infty}E\int_0^T \|(\Pi_s^{[N]}-\Pi_s)\tilde{y}_s\|^2ds=0.
\end{equation}
From (\ref{23a}), (\ref{23b}) and Assumption \ref{assumption1}, we have
\begin{equation}\label{25}
  \lim_{N\rightarrow\infty}E\int_0^T\|(\mathds{A}_s-\mathds{A}_s^{[N]})\tilde{y}_s\|^2ds=0.
\end{equation}
By (\ref{22}), (\ref{23}), (\ref{25}), Assumption \ref{assumption1} and Lemma \ref{lemmabounded}, one has
\begin{equation}\label{26}
  \lim_{N\rightarrow\infty}\sup_{t\in [0,T]}E\|\tilde{y}_t-\bar{x}_t^{[N]}\|^2=0.
\end{equation}
Then we obtain the first conclusion.

Next we show that the second conclusion holds. In fact, it follows from \cite[Th.7.2]{Prato} that
\begin{equation}\label{24}
  \sup_{t\in[0,T]}E\|\tilde{y}_t\|^2\leq C(1+\|x_0\|^2).
\end{equation}
Since
\begin{equation}\label{26a}
  \sup_{t\in[0,T]}E\|\bar{x}_t^{[N]}\|^2\leq 2\sup_{t\in[0,T]}E\|\bar{x}_t^{[N]}-\tilde{y}_t\|^2+2\sup_{t\in[0,T]}E\|\tilde{y}_t\|^2,
\end{equation}
combining (\ref{26}), (\ref{24}) and (\ref{26a}), there is a constant $C$ such that
$\sup\limits_{1\leq N<\infty} \sup\limits_{t\in[0,T]}E\|\bar{x}_t^{[N]}\|^2\leq C$.
\end{proof}

\begin{remark}
A similar result to Lemma \ref{lemma3} has been previously obtained in \cite[Th. 3.3]{Dunyak3}, but here we provide a new proof approach.
\end{remark}

\begin{lemma}\label{lemma4}
There is a constant $C$ such that
$$
\lim_{N\rightarrow\infty}\sup_{t\in [0,T]}E\|\hat{x}_t^{[N]}-\tilde{y}_t\|^2=0,\quad \sup\limits_{1\leq N<\infty} \sup\limits_{t\in[0,T]}E\|\hat{x}_t^{[N]}\|^2\leq C.
$$
\end{lemma}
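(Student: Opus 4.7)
The plan is to subtract the SEE for $\tilde{y}_t$ from the SEE (\ref{y_t}) satisfied by $\hat{x}_t^{[N]}$, apply Gr\"onwall's inequality in a fashion parallel to the proof of Lemma \ref{lemma3}, and deal with one extra error term arising from the projection operator $\Upsilon_N$ of Lemma \ref{lemmaapproximate}. Using the decomposed form (\ref{decomposedform}) for $\Pi_t\tilde{y}_t$, the auxiliary state (\ref{auxiliarystate}) can be rewritten as
\begin{equation*}
d\tilde{y}_t=\Big[(A-\tfrac{2B^2}{R}\Pi_t^\bot)I+bM\Big]\tilde{y}_t\,dt-\tfrac{2B^2}{R}\Xi_t\,dt+\sigma\,dW_t^Q,\qquad \Xi_t\triangleq\sum_{l=1}^\infty(\bar{\Pi}_t^l-\Pi_t^\bot)\varphi_t^l f_l^M.
\end{equation*}
Observing that $N\sum_{i=1}^N \mathds{1}_{P_i}\langle\mathds{1}_{P_i},\cdot\rangle=\Upsilon_N$, subtracting from (\ref{y_t}) and writing $M^{[N]}\hat{x}_s^{[N]}-M\tilde{y}_s=M^{[N]}(\hat{x}_s^{[N]}-\tilde{y}_s)+(M^{[N]}-M)\tilde{y}_s$ yields
\begin{equation*}
\hat{x}_t^{[N]}-\tilde{y}_t=(x_0^{[N]}-x_0)+\int_0^t\!\!\Big[\big(\tfrac{2B^2}{R}(\Pi_s^\bot-\Pi_s^\bot)\text{-adjusted drift}\big)\Big]ds-\tfrac{2B^2}{R}\int_0^t(\Upsilon_N-I)\Xi_s\,ds+\sigma(W_t^{[N]}-W_t^Q),
\end{equation*}
where the drift splits into a part bounded by $\sup_N\sup_t\|\mathds{A}_t^{[N]}\|_{op}\|\hat{x}_s^{[N]}-\tilde{y}_s\|$ (using Lemma \ref{lemmabounded} and that $\|M^{[N]}\|_{op}\le 1$) and a part bounded by $|b|\,\|(M^{[N]}-M)\tilde{y}_s\|$.

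Applying the triangle inequality and taking expectations as in the proof of Lemma \ref{lemma3}, we obtain, for some constant $C$ independent of $N$,
\begin{equation*}
E\|\hat{x}_t^{[N]}-\tilde{y}_t\|^2\le C\Big(\|x_0^{[N]}-x_0\|^2+\|M^{[N]}-M\|_{op}^2\!\!\int_0^T\!\!E\|\tilde{y}_s\|^2 ds+E\!\!\int_0^T\!\!\|(\Upsilon_N-I)\Xi_s\|^2 ds+\sup_{t\in[0,T]}E\|W_t^{[N]}-W_t^Q\|^2\Big)+C\!\!\int_0^t\!\!E\|\hat{x}_s^{[N]}-\tilde{y}_s\|^2 ds.
\end{equation*}
The first term vanishes by Assumption \ref{assumption1}; the second vanishes by Assumption \ref{assumption1} together with (\ref{24}); the fourth vanishes by Assumption \ref{assumption2} exactly as in (\ref{23}). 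The key new term is $E\int_0^T\|(\Upsilon_N-I)\Xi_s\|^2 ds$.

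For this term we combine Lemma \ref{lemmaapproximate} with the dominated convergence theorem. The bound $\|\Upsilon_N\|_{op}\le 1$ shown in the proof of Lemma \ref{lemmaapproximate} gives $\|(\Upsilon_N-I)\Xi_s\|^2\le 4\|\Xi_s\|^2$ pointwise in $(s,\omega)$. The estimate already carried out before stating Theorem \ref{theoremasymptotic} shows $E\int_0^T\|\Xi_s\|^2 ds\le CE\int_0^T\|\tilde{y}_s\|^2 ds<\infty$, providing an integrable dominating function. Since $\Xi_s\in L^2[0,1]$ for each $(s,\omega)$, Lemma \ref{lemmaapproximate} gives $\|(\Upsilon_N-I)\Xi_s\|\to 0$ pointwise, so DCT yields $E\int_0^T\|(\Upsilon_N-I)\Xi_s\|^2 ds\to 0$. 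Gr\"onwall's inequality then produces the first assertion $\lim_{N\to\infty}\sup_{t\in[0,T]}E\|\hat{x}_t^{[N]}-\tilde{y}_t\|^2=0$. The second assertion follows from $E\|\hat{x}_t^{[N]}\|^2\le 2E\|\hat{x}_t^{[N]}-\tilde{y}_t\|^2+2E\|\tilde{y}_t\|^2$ together with (\ref{24}).

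The main obstacle is the projection term: one must verify that $\Xi_s$ is (Bochner) measurable as an $L^2[0,1]$-valued process to apply both Lemma \ref{lemmaapproximate} pointwise and the dominated convergence theorem. This measurability was essentially established in the discussion preceding Theorem \ref{theoremasymptotic} when checking $\hat{u}^i\in\mathcal{U}_{sd}$, where truncated sums of $\mathcal{F}^d$-progressively measurable terms were shown to converge pointwise; the same argument applied directly to $\Xi_s\in L^2[0,1]$ gives the needed $L^2[0,1]$-valued Bochner measurability.
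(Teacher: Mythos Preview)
Your proposal is correct and follows essentially the same route as the paper's proof: rewrite $\tilde{y}$ via the decomposed form (\ref{decomposedform}), subtract from (\ref{y_t}), split $M^{[N]}\hat{x}_s^{[N]}-M\tilde{y}_s$ exactly as in (\ref{29}), apply Gr\"onwall, and handle the projection error $E\int_0^T\|(\Upsilon_N-I)\Xi_s\|^2ds$ by combining the uniform bound $\|\Upsilon_N\|_{op}\le 1$, Lemma \ref{lemmaapproximate} pointwise, and DCT---which is precisely the paper's argument (\ref{30a})--(\ref{31}). One small correction: the Gr\"onwall drift operator here is $(A-\tfrac{2B^2}{R}\Pi_s^\bot)I+bM^{[N]}$, not $\mathds{A}_s^{[N]}$, so its uniform bound comes from the boundedness of the scalar Riccati solution $\Pi_s^\bot$ and $\|M^{[N]}\|_{op}\le 1$ rather than from Lemma \ref{lemmabounded}.
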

\begin{proof}
From (\ref{auxiliarystate}) and (\ref{auxiliarycontrol}), one has for a.s. $\omega$,
\begin{equation}\label{tildeyt}
  \tilde{y}_t=x_0+\int_0^t \Big\{\Big[(A-\frac{2B^2}{R}\Pi_s^\bot)I+bM\Big]\tilde{y}_s-\frac{2B^2}{R}\sum_{l=1}^\infty (\bar{\Pi}_s^l-\Pi_s^\bot)\varphi_s^l f_l^M\Big\}ds+\sigma W_t^Q,\quad \forall t\in[0,T].
\end{equation}
In view of (\ref{y_t}) and (\ref{tildeyt}), we have for a.s. $\omega$,
\begin{align}\label{28}
 \hat{x}_t^{[N]} & -\tilde{y}_t = x_0^{[N]}-x_0+\int_0^t \Big\{(A-\frac{2B^2}{R}\Pi_s^\bot)I(\hat{x}_s^{[N]}-\tilde{y}_s)+b(M^{[N]}\hat{x}_s^{[N]}-M\tilde{y}_s)\nonumber\\
 & -\frac{2B^2}{R}\Big[N\sum_{i=1}^N\mathds{1}_{P_i}\Big\langle \mathds{1}_{P_i},\sum_{l=1}^\infty (\bar{\Pi}_s^l-\Pi_s^\bot)\varphi_s^lf_l^M\Big\rangle-\sum_{l=1}^\infty (\bar{\Pi}_s^l-\Pi_s^\bot)\varphi_s^lf_l^M\Big]\Big\}ds+\sigma (W_t^{[N]}-W_t^Q),\quad \forall t\in[0,T].
\end{align}
Then it holds that
\begin{align}\label{28a}
  \|\hat{x}_t^{[N]}-\tilde{y}_t\|^2\leq & 3\|x_0^{[N]}-x_0\|^2+9t\int_0^t \Big[\|(A-\frac{2B^2}{R}\Pi_s^\bot)I(\hat{x}_s^{[N]}-\tilde{y}_s)\|^2+b^2\|M^{[N]}\hat{x}_s^{[N]}-M\tilde{y}_s\|^2\Big.\nonumber\\
  & \Big.+\frac{4B^4}{R^2}\Big\|N\sum_{i=1}^N\mathds{1}_{P_i}\Big\langle \mathds{1}_{P_i},\sum_{l=1}^\infty (\bar{\Pi}_s^l-\Pi_s^\bot)\varphi_s^lf_l^M\Big\rangle-\sum_{l=1}^\infty (\bar{\Pi}_s^l-\Pi_s^\bot)\varphi_s^lf_l^M\Big\|^2\Big]ds\nonumber\\
  & +3\sigma^2\|W_t^{[N]}-W_t^Q\|^2.
\end{align}
Using the triangle inequality yields
\begin{align}\label{29}
  b^2\|M^{[N]}\hat{x}_s^{[N]}-M\tilde{y}_s\|^2 & =b^2\|M^{[N]}\hat{x}_s^{[N]}-M^{[N]}\tilde{y}_s+M^{[N]}\tilde{y}_s-M\tilde{y}_s\|^2\nonumber\\
  & \leq 2b^2\sup_{1\leq N<\infty}\|M^{[N]}\|_{op}^2 \|\hat{x}_s^{[N]}-\tilde{y}_s\|^2+2b^2\|M^{[N]}-M\|_{op}^2\|\tilde{y}_s\|^2.
\end{align}
By (\ref{29}) and taking expectation on both sides of (\ref{28a}), one has
\begin{align*}
  E\|\hat{x}_t^{[N]}-\tilde{y}_t\|^2\leq & 3\|x_0^{[N]}-x_0\|^2+9T\Big(\sup_{t\in[0,T]}|A-\frac{2B^2}{R}\Pi_t^\bot|^2+2b^2\sup_{1\leq N<\infty}\|M^{[N]}\|_{op}^2\Big)\int_0^t E\|\hat{x}_s^{[N]}-\tilde{y}_s\|^2ds\\
  & +18b^2T\|M^{[N]}-M\|_{op}^2 E\int_0^T\|\tilde{y}_s\|^2ds\\
  & +\frac{36B^4}{R^2}TE\int_0^T \Big\|N\sum_{i=1}^N\mathds{1}_{P_i}\Big\langle \mathds{1}_{P_i},\sum_{l=1}^\infty (\bar{\Pi}_s^l-\Pi_s^\bot)\varphi_s^lf_l^M\Big\rangle-\sum_{l=1}^\infty (\bar{\Pi}_s^l-\Pi_s^\bot)\varphi_s^lf_l^M\Big\|^2ds\\
  & +3\sigma^2\sup_{t\in[0,T]}E\|W_t^{[N]}-W_t^Q\|^2.
\end{align*}
Since $E\|\hat{x}_t^{[N]}-\tilde{y}_t\|^2\leq 2\sup_{t\in[0,T]}E\|\hat{x}_t^{[N]}\|^2+2\sup_{t\in[0,T]}E\|\tilde{y}_t\|^2$, one has that $E\|\hat{x}_t^{[N]}-\tilde{y}_t\|^2$ is bounded in $[0,T]$. Applying Gronwall's inequality, we have
\begin{align}\label{30}
  \sup_{t\in[0,T]} & E\|\hat{x}_t^{[N]}-\tilde{y}_t\|^2\leq \Big[3\|x_0^{[N]}-x_0\|^2+18b^2T\|M^{[N]}-M\|_{op}^2 E\int_0^T\|\tilde{y}_s\|^2ds\Big.\nonumber\\
  & +\frac{36B^4}{R^2}TE\int_0^T \Big\|N\sum_{i=1}^N\mathds{1}_{P_i}\Big\langle \mathds{1}_{P_i},\sum_{l=1}^\infty (\bar{\Pi}_s^l-\Pi_s^\bot)\varphi_s^lf_l^M\Big\rangle-\sum_{l=1}^\infty (\bar{\Pi}_s^l-\Pi_s^\bot)\varphi_s^lf_l^M\Big\|^2ds\nonumber\\
  & +3\sigma^2\sup_{t\in[0,T]}E\|W_t^{[N]}-W_t^Q\|^2\Big]\exp\Big[9T^2\Big(\sup_{t\in[0,T]}|A-\frac{2B^2}{R}\Pi_t^\bot|^2+2b^2\sup_{1\leq N<\infty}\|M^{[N]}\|_{op}^2\Big)\Big].
\end{align}
Since there exists a constant $C$ such that
\begin{align}\label{30a}
  & \Big\|N\sum_{i=1}^N\mathds{1}_{P_i}\Big\langle \mathds{1}_{P_i},\sum_{l=1}^\infty (\bar{\Pi}_s^l-\Pi_s^\bot)\varphi_s^lf_l^M\Big\rangle-\sum_{l=1}^\infty (\bar{\Pi}_s^l-\Pi_s^\bot)\varphi_s^lf_l^M\Big\|^2\nonumber\\
  \leq & 2\Big\|N\sum_{i=1}^N\mathds{1}_{P_i}\Big\langle \mathds{1}_{P_i},\sum_{l=1}^\infty (\bar{\Pi}_s^l-\Pi_s^\bot)\varphi_s^lf_l^M\Big\rangle\Big\|^2+2\Big\|\sum_{l=1}^\infty (\bar{\Pi}_s^l-\Pi_s^\bot)\varphi_s^lf_l^M\Big\|^2\nonumber\\
  \leq & 4\|\sum_{l=1}^\infty (\bar{\Pi}_s^l-\Pi_s^\bot)\varphi_s^lf_l^M\Big\|^2\nonumber\\
  \leq & C\|\tilde{y}_s\|^2,
\end{align}
by Lemma \ref{lemmaapproximate} and Dominated Convergence Theorem, we obtain
\begin{equation}\label{31}
\lim_{N\rightarrow\infty}E\int_0^T \Big\|N\sum_{i=1}^N\mathds{1}_{P_i}\Big\langle \mathds{1}_{P_i},\sum_{l=1}^\infty (\bar{\Pi}_s^l-\Pi_s^\bot)\varphi_s^lf_l^M\Big\rangle-\sum_{l=1}^\infty (\bar{\Pi}_s^l-\Pi_s^\bot)\varphi_s^lf_l^M\Big\|^2ds=0.
\end{equation}
Combining (\ref{23}), (\ref{30}), (\ref{31}) and Assumption \ref{assumption1}, we derive
$\lim_{N\rightarrow\infty}\sup_{t\in[0,T]}E\|\hat{x}_t^{[N]}-\tilde{y}_t\|^2=0$.
Then we get the first conclusion. Similar to the proof of the second part of Lemma \ref{lemma3}, there is a constant $C$ such that $\sup\limits_{1\leq N<\infty} \sup\limits_{t\in[0,T]}E\|\hat{x}_t^{[N]}\|^2\leq C$.
\end{proof}

\begin{lemma}\label{lemma5}
There exists a constant $C$ such that
$$
\lim_{N\rightarrow\infty}\int_0^T E\|\bar{u}_t^{[N]}-\tilde{u}_t\|^2dt=0,\quad \sup\limits_{1\leq N<\infty} \sup\limits_{t\in[0,T]}E\|\bar{u}_t^{[N]}\|^2\leq C.
$$
\end{lemma}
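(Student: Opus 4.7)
The plan is to exploit the feedback representations $\bar{u}_t^{[N]}=-\tfrac{2B}{R}\Pi_t^{[N]}\bar{x}_t^{[N]}$ from \eqref{centralizedcontrolrewrite} and $\tilde{u}_t=-\tfrac{2B}{R}\Pi_t\tilde{y}_t$ from \eqref{tildeu}, and then reduce both claims of the lemma to results already established (Lemma \ref{lemmabounded}, Lemma \ref{lemma3}, and the convergence arguments appearing in the proof of Lemma \ref{lemma3}).

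First, I would write the telescoping identity
\begin{equation*}
\bar{u}_t^{[N]}-\tilde{u}_t=-\frac{2B}{R}\Pi_t^{[N]}(\bar{x}_t^{[N]}-\tilde{y}_t)-\frac{2B}{R}(\Pi_t^{[N]}-\Pi_t)\tilde{y}_t,
\end{equation*}
so that, by the elementary inequality $(a+b)^2\leq 2a^2+2b^2$,
\begin{equation*}
\|\bar{u}_t^{[N]}-\tilde{u}_t\|^2\leq \frac{8B^2}{R^2}\|\Pi_t^{[N]}\|_{op}^2\|\bar{x}_t^{[N]}-\tilde{y}_t\|^2+\frac{8B^2}{R^2}\|(\Pi_t^{[N]}-\Pi_t)\tilde{y}_t\|^2.
\end{equation*}
Integrating over $[0,T]$ and taking expectation, the first term is controlled by the uniform operator bound $\sup_{N,t}\|\Pi_t^{[N]}\|_{op}^2\leq C$ (a consequence of Lemma \ref{lemmabounded}, since $\Pi_t^{[N]}$ is the difference between $AI+bM^{[N]}$ and $\mathds{A}_t^{[N]}$ up to the factor $\frac{R}{2B^2}$) together with Lemma \ref{lemma3}, which yields $\sup_{t\in[0,T]}E\|\bar{x}_t^{[N]}-\tilde{y}_t\|^2\to 0$. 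For the second term, I would reuse the pointwise convergence $\|(\Pi_t^{[N]}-\Pi_t)\tilde{y}_t\|\to 0$ (a.s. in $\omega$ and a.e. in $t$) from \eqref{stronglyconverge}--\eqref{stronglyconverge1}, together with the uniform domination \eqref{dominated1} and Lemma \ref{lemma3}'s bound $\sup_{t}E\|\tilde{y}_t\|^2<\infty$, so that the Dominated Convergence Theorem (as in \eqref{23b}) gives $\lim_N E\int_0^T\|(\Pi_s^{[N]}-\Pi_s)\tilde{y}_s\|^2 ds=0$. Combining the two pieces yields the first claim.

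For the boundedness claim, I would simply estimate
\begin{equation*}
E\|\bar{u}_t^{[N]}\|^2\leq \frac{4B^2}{R^2}\|\Pi_t^{[N]}\|_{op}^2 E\|\bar{x}_t^{[N]}\|^2,
\end{equation*}
and then apply the uniform operator bound on $\Pi_t^{[N]}$ together with the second statement of Lemma \ref{lemma3}, namely $\sup_{N}\sup_{t\in[0,T]}E\|\bar{x}_t^{[N]}\|^2\leq C$, to conclude.

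I do not anticipate any serious obstacle: the proof is essentially a clean triangle-inequality decomposition once Lemma \ref{lemma3} and the operator-norm estimate of Lemma \ref{lemmabounded} are in hand. The only delicate point is that the convergence of the second piece $E\int_0^T\|(\Pi_t^{[N]}-\Pi_t)\tilde{y}_t\|^2 dt\to 0$ is not an operator-norm convergence of $\Pi^{[N]}\to\Pi$ but only a strong-operator convergence applied to the specific random element $\tilde{y}_t$; this is precisely why the dominated convergence argument (already executed in the proof of Lemma \ref{lemma3}) has to be invoked rather than a naive bound $\|\Pi_t^{[N]}-\Pi_t\|_{op}\to 0$.
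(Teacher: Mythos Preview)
Your proposal is correct and follows essentially the same route as the paper: the same telescoping $\Pi_t^{[N]}\bar{x}_t^{[N]}-\Pi_t\tilde{y}_t=\Pi_t^{[N]}(\bar{x}_t^{[N]}-\tilde{y}_t)+(\Pi_t^{[N]}-\Pi_t)\tilde{y}_t$, the same appeal to Lemma~\ref{lemmabounded} and Lemma~\ref{lemma3} for the first piece, the same dominated-convergence argument \eqref{23b} for the second, and the same direct bound for $\sup_{N,t}E\|\bar{u}_t^{[N]}\|^2$. The only minor imprecision is that the bound $\sup_t E\|\tilde{y}_t\|^2<\infty$ is not part of the statement of Lemma~\ref{lemma3} but rather \eqref{24} inside its proof; otherwise your outline matches the paper line for line.
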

\begin{proof}
From (\ref{centralizedcontrolrewrite}) and (\ref{tildeu}), applying the triangle inequality yields
\begin{align}\label{31a}
  \int_0^T E\|\bar{u}_t^{[N]}-\tilde{u}_t\|^2dt & =\frac{4B^2}{R^2}\int_0^T E\|\Pi_t^{[N]}\bar{x}_t^{[N]}-\Pi_t\tilde{y}_t\|^2dt\nonumber\\
  & =\frac{4B^2}{R^2}\int_0^T E\|\Pi_t^{[N]}\bar{x}_t^{[N]}-\Pi_t^{[N]}\tilde{y}_t+\Pi_t^{[N]}\tilde{y}_t-\Pi_t\tilde{y}_t\|^2dt\nonumber\\
  & \leq \frac{8B^2}{R^2}\int_0^T E\|\Pi_t^{[N]}(\bar{x}_t^{[N]}-\tilde{y}_t)\|^2dt+\frac{8B^2}{R^2}\int_0^T E\|(\Pi_t^{[N]}-\Pi_t)\tilde{y}_t\|^2dt.
\end{align}
Since
\begin{equation*}
  \|(\Pi_t^{[N]}-\Pi_t)\tilde{y}_t\|^2\leq 2\Big(\sup\limits_{1\leq N<\infty} \sup\limits_{t\in[0,T]}\|\Pi_t^{[N]}\|_{op}^2+\sup\limits_{t\in[0,T]}\|\Pi_t\|_{op}^2\Big)\|\tilde{y}_t\|^2,
\end{equation*}
by (\ref{stronglyconverge1}) and Dominated Convergence Theorem, one has
\begin{equation}\label{31b}
  \lim_{N\rightarrow\infty}\int_0^T E\|(\Pi_t^{[N]}-\Pi_t)\tilde{y}_t\|^2dt=0.
\end{equation}
By Lemmas \ref{lemmabounded} and \ref{lemma3},
\begin{equation}\label{31c}
  \int_0^T E\|\Pi_t^{[N]}(\bar{x}_t^{[N]}-\tilde{y}_t)\|^2dt\leq  \Big(\sup\limits_{1\leq N<\infty} \sup\limits_{t\in[0,T]}\|\Pi_t^{[N]}\|_{op}^2\Big)\int_0^T E\|\bar{x}_t^{[N]}-\tilde{y}_t\|^2dt\rightarrow 0.
\end{equation}
Then, by (\ref{31a}), (\ref{31b}), (\ref{31c}), it holds that
$\lim_{N\rightarrow\infty}\int_0^T E\|\bar{u}_t^{[N]}-\tilde{u}_t\|^2dt=0$.
Then we obtain the first result. Since
\begin{align*}
  \sup\limits_{t\in[0,T]}E\|\bar{u}_t^{[N]}\|^2=\frac{4B^2}{R^2}\sup\limits_{t\in[0,T]}E\|\Pi_t^{[N]}\bar{x}_t^{[N]}\|^2\leq \frac{4B^2}{R^2}\Big(\sup\limits_{1\leq N<\infty} \sup\limits_{t\in[0,T]}\|\Pi_t^{[N]}\|_{op}^2\Big) \Big(\sup\limits_{1\leq N<\infty} \sup\limits_{t\in[0,T]}E\|\bar{x}_t^{[N]}\|^2\Big),
\end{align*}
by Lemmas \ref{lemmabounded} and \ref{lemma3}, there is a constant $C$ such that
$\sup\limits_{1\leq N<\infty} \sup\limits_{t\in[0,T]}E\|\bar{u}_t^{[N]}\|^2\leq C$.
\end{proof}

\begin{lemma}\label{lemma6}
There exists a constant $C$ such that
$$
\lim_{N\rightarrow\infty}\int_0^T E\|\hat{u}_t^{[N]}-\tilde{u}_t\|^2dt=0,\quad \sup\limits_{1\leq N<\infty} \sup\limits_{t\in[0,T]}E\|\hat{u}_t^{[N]}\|^2\leq C.
$$
\end{lemma}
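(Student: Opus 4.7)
The plan is to mimic the structure of Lemma \ref{lemma5}, but using the decomposed representations of $\tilde{u}_t$ in \eqref{auxiliarycontrol} and of $\hat{u}_t^{[N]}$ in \eqref{hatuN}. Subtracting and writing $\Phi_t\triangleq\sum_{l=1}^\infty(\bar{\Pi}_t^l-\Pi_t^\bot)\varphi_t^l f_l^M$ (which is exactly the object appearing in the second term of $\tilde{u}_t$), one gets
\begin{equation*}
\hat{u}_t^{[N]}-\tilde{u}_t=-\frac{2B}{R}\Pi_t^\bot(\hat{x}_t^{[N]}-\tilde{y}_t)-\frac{2B}{R}\Bigl(N\sum_{i=1}^N\mathds{1}_{P_i}\langle\mathds{1}_{P_i},\Phi_t\rangle-\Phi_t\Bigr),
\end{equation*}
so by the triangle inequality the squared norm splits into two pieces, and the proof reduces to controlling each piece in $L^2(\Omega\times[0,T])$.

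The first piece is straightforward: $\Pi_t^\bot$ is the (deterministic, bounded) solution of \eqref{Pibot}, so $\sup_{t\in[0,T]}|\Pi_t^\bot|^2\le C$, and Lemma \ref{lemma4} immediately yields
$
\int_0^T E\|\Pi_t^\bot(\hat{x}_t^{[N]}-\tilde{y}_t)\|^2dt\le C\,T\sup_{t\in[0,T]}E\|\hat{x}_t^{[N]}-\tilde{y}_t\|^2\to 0.
$
The second piece is exactly $\|\Upsilon_N\Phi_t-\Phi_t\|^2$ in the notation of Lemma \ref{lemmaapproximate}. For each fixed $t,\omega$, Lemma \ref{lemmaapproximate} applied to $\phi=\Phi_t$ gives pointwise convergence to zero; the domination needed for DCT is the one already established in \eqref{30a} of the proof of Lemma \ref{lemma4}, namely $\|\Upsilon_N\Phi_t-\Phi_t\|^2\le C\|\tilde{y}_t\|^2$, together with $E\int_0^T\|\tilde{y}_t\|^2dt<\infty$ from \eqref{24}. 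Applying DCT then yields $\int_0^T E\|\Upsilon_N\Phi_t-\Phi_t\|^2dt\to 0$, and combining the two pieces gives the first claim.

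For the uniform bound, write $\|\hat{u}_t^{[N]}\|^2\le \tfrac{8B^2}{R^2}|\Pi_t^\bot|^2\|\hat{x}_t^{[N]}\|^2+\tfrac{8B^2}{R^2}\|\Upsilon_N\Phi_t\|^2$. The first summand is bounded in $N$ and $t$ by the boundedness of $\Pi_t^\bot$ and the second conclusion of Lemma \ref{lemma4}. For the second, use the operator-norm bound $\|\Upsilon_N\|_{op}\le 1$ from the proof of Lemma \ref{lemmaapproximate} to obtain $\|\Upsilon_N\Phi_t\|^2\le\|\Phi_t\|^2$, and then the Parseval-type estimate $\|\Phi_t\|^2=\sum_l|\bar{\Pi}_t^l-\Pi_t^\bot|^2|\varphi_t^l|^2\le C\|\tilde{y}_t\|^2$ (using the uniform boundedness of $\bar{\Pi}_t^l$ and $\Pi_t^\bot$ already exploited in Proposition \ref{decomposedprop} and estimate \eqref{30a}), combined with \eqref{24}.

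The only subtle step is justifying DCT for the second piece: one must be sure that the dominating function $C\|\tilde{y}_t\|^2$ is integrable on $\Omega\times[0,T]$ and that the $\Upsilon_N\Phi_t$ construction is well-defined for a.e.\ $(t,\omega)$. Both of these are inherited from the same analysis carried out in Lemma \ref{lemma4}, so no genuinely new estimate is required; the lemma will follow by reusing the ingredients already in place.
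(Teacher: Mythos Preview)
Your proposal is correct and follows essentially the same route as the paper: the paper subtracts \eqref{hatuN} from \eqref{auxiliarycontrol}, splits into the same two pieces, handles the first via boundedness of $\Pi_t^\bot$ together with Lemma \ref{lemma4}, and handles the second by citing \eqref{31} (which was itself proved in Lemma \ref{lemma4} using exactly the Lemma \ref{lemmaapproximate}${}+{}$DCT${}+{}$\eqref{30a} argument you spell out). For the uniform bound the paper likewise splits $\|\hat{u}_t^{[N]}\|^2$ in two and bounds the $\Upsilon_N\Phi_t$ term via \eqref{30a} and \eqref{24}, matching your use of $\|\Upsilon_N\|_{op}\le 1$ and $\|\Phi_t\|^2\le C\|\tilde{y}_t\|^2$.
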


\begin{proof}
From (\ref{auxiliarycontrol}) and (\ref{hatuN}), one has
\begin{align*}
  \int_0^TE\|\hat{u}_t^{[N]}-\tilde{u}_t\|^2dt= & \int_0^TE \Big\|-\frac{2B}{R}\Pi_t^\bot I(\hat{x}_t^{[N]}-\tilde{y}_t)\Big.\\
  & \Big.-\frac{2B}{R}\Big[N\sum_{i=1}^N\mathds{1}_{P_i}\Big\langle \mathds{1}_{P_i},\sum_{l=1}^\infty (\bar{\Pi}_t^l-\Pi_t^\bot)\varphi_t^lf_l^M\Big\rangle-\sum_{l=1}^\infty (\bar{\Pi}_t^l-\Pi_t^\bot)\varphi_t^lf_l^M\Big]\Big\|^2dt\\
  \leq & \frac{8B^2}{R^2}\sup_{t\in [0,T]}|\Pi_t^\bot|^2 \int_0^TE \|\hat{x}_t^{[N]}-\tilde{y}_t\|^2dt\\
  & +\frac{8B^2}{R^2}\int_0^TE\Big\|N\sum_{i=1}^N\mathds{1}_{P_i}\Big\langle \mathds{1}_{P_i},\sum_{l=1}^\infty (\bar{\Pi}_t^l-\Pi_t^\bot)\varphi_t^lf_l^M\Big\rangle-\sum_{l=1}^\infty (\bar{\Pi}_t^l-\Pi_t^\bot)\varphi_t^lf_l^M\Big\|^2dt.
\end{align*}
Then ,by Lemma \ref{lemma4} and (\ref{31}), we have
$\lim_{N\rightarrow\infty}\int_0^T E\|\hat{u}_t^{[N]}-\tilde{u}_t\|^2dt=0$.
Then we obtain the first conclusion. It follows from \eqref{30a} that
\begin{align*}
  \sup\limits_{t\in[0,T]}E\|\hat{u}_t^{[N]}\|^2 & \leq \frac{8B^2}{R^2}\sup\limits_{t\in[0,T]}E\|\Pi_t^\bot I\hat{x}_t^{[N]}\|^2+\frac{8B^2}{R^2}\sup\limits_{t\in[0,T]}E\Big\|N\sum_{i=1}^N\mathds{1}_{P_i}\Big\langle \mathds{1}_{P_i},\sum_{l=1}^\infty (\bar{\Pi}_t^l-\Pi_t^\bot)\varphi_t^lf_l^M\Big\rangle\Big\|^2\\
  & \leq \frac{8B^2}{R^2}\sup\limits_{t\in[0,T]}E\|\Pi_t^\bot I\hat{x}_t^{[N]}\|^2+\frac{8B^2}{R^2}C\sup\limits_{t\in[0,T]}E\|\tilde{y}_t\|^2.
\end{align*}
Therefore, in view of (\ref{24}) and Lemma \ref{lemma4}, we can derive the other conclusion.
\end{proof}

Next, we propose the proof of Theorem \ref{theoremasymptotic}.
\begin{proof}[Proof of Theorem \ref{theoremasymptotic}]
From (\ref{J(u[N])=1/NJsoc}), (\ref{centralizedcontrolrewrite}), (\ref{centralizedcontrolith}), (\ref{J(hatu[N])=1/NJsoc}) and (\ref{hatuN}), one has
\begin{align}\label{21}
  \Big|\frac{1}{N}J_{soc}^N(\mathbf{\hat{u}})-\frac{1}{N}\inf_{\mathbf{u}\in\mathcal{U}_c}J_{soc}^N(\mathbf{u})\Big| & =|J(\hat{u}^{[N]})-J(\bar{u}^{[N]})|\nonumber\\
  & \leq\frac{1}{2}E\int_0^TQ\Big|\|(I-\Gamma M^{[N]})\hat{x}_t^{[N]}\|^2-\|(I-\Gamma M^{[N]})\bar{x}_t^{[N]}\|^2\Big|\nonumber\\
  &\quad +R\Big|\|\hat{u}_t^{[N]}\|^2-\|\bar{u}_t^{[N]}\|^2\Big|dt+\frac{1}{2}EQ_T\Big|\|\hat{x}_T^{[N]}\|^2-\|\bar{x}_T^{[N]}\|^2\Big|.
\end{align}
Applying Lemmas \ref{lemma3} and \ref{lemma4}, we have
\begin{align*}
  & \frac{1}{2}E\int_0^TQ\Big|\|(I-\Gamma M^{[N]})\hat{x}_t^{[N]}\|^2-\|(I-\Gamma M^{[N]})\bar{x}_t^{[N]}\|^2\Big|dt\\
  = & \frac{1}{2}E\int_0^TQ \Big[\|(I-\Gamma M^{[N]})\hat{x}_t^{[N]}\|+\|(I-\Gamma M^{[N]})\bar{x}_t^{[N]}\|\Big] \cdot\Big|\|(I-\Gamma M^{[N]})\hat{x}_t^{[N]}\|-\|(I-\Gamma M^{[N]})\bar{x}_t^{[N]}\|\Big|dt\\
  \leq & \frac{1}{2}E\int_0^TQ \|I-\Gamma M^{[N]}\|_{op} \cdot\big(\|\hat{x}_t^{[N]}\|+\|\bar{x}_t^{[N]}\|\big)\cdot\|(I-\Gamma M^{[N]})(\hat{x}_t^{[N]}-\bar{x}_t^{[N]})\|dt\\
  \leq & \frac{1}{2}Q \|I-\Gamma M^{[N]}\|_{op}^2 E\int_0^T \big(\|\hat{x}_t^{[N]}\|+\|\bar{x}_t^{[N]}\|\big)\cdot\|\hat{x}_t^{[N]}-\bar{x}_t^{[N]}\|dt\\
  \leq & \frac{1}{2}Q \|I-\Gamma M^{[N]}\|_{op}^2 \int_0^T \big[E\big(\|\hat{x}_t^{[N]}\|+\|\bar{x}_t^{[N]}\|\big)^2\big]^\frac{1}{2} \big(E\|\hat{x}_t^{[N]}-\bar{x}_t^{[N]}\|^2\big)^\frac{1}{2}dt\\
  \leq & \frac{1}{2}Q \|I-\Gamma M^{[N]}\|_{op}^2 \big(\sup_{t\in[0,T]}E\|\hat{x}_t^{[N]}-\bar{x}_t^{[N]}\|^2\big)^\frac{1}{2} \int_0^T \big[E\big(\|\hat{x}_t^{[N]}\|+\|\bar{x}_t^{[N]}\|\big)^2\big]^\frac{1}{2}dt\\
  \leq & \frac{\sqrt{T}}{2}Q \|I-\Gamma M^{[N]}\|_{op}^2 \big(\sup_{t\in[0,T]}E\|\hat{x}_t^{[N]}-\bar{x}_t^{[N]}\|^2\big)^\frac{1}{2} \Big[2\int_0^T E\|\hat{x}_t^{[N]}\|^2dt+2\int_0^T E\|\bar{x}_t^{[N]}\|^2dt\Big]^\frac{1}{2}\\
  \leq & \sqrt{T}Q \|I-\Gamma M^{[N]}\|_{op}^2 \Big(\sup_{t\in[0,T]}E\|\hat{x}_t^{[N]}-\tilde{y}_t\|^2+\sup_{t\in[0,T]}E\|\tilde{y}_t-\bar{x}_t^{[N]}\|^2\Big)^\frac{1}{2} \Big[\int_0^T E\|\hat{x}_t^{[N]}\|^2dt+\int_0^T E\|\bar{x}_t^{[N]}\|^2dt\Big]^\frac{1}{2}\\
  & \rightarrow 0,\ as\ N\rightarrow\infty.
\end{align*}
Similarly, we obtain
\begin{align*}
  & \frac{1}{2}EQ_T\Big|\|\hat{x}_T^{[N]}\|^2-\|\bar{x}_T^{[N]}\|^2\Big|\\
  = & \frac{1}{2}Q_T E\Big[\big(\|\hat{x}_T^{[N]}\|+\|\bar{x}_T^{[N]}\|\big)\big|\|\hat{x}_T^{[N]}\|-\|\bar{x}_T^{[N]}\|\big|\Big]\\
  \leq & \frac{1}{2}Q_T \big(2E\|\hat{x}_T^{[N]}\|^2+2E\|\bar{x}_T^{[N]}\|^2\big)^\frac{1}{2} \big(E\|\hat{x}_T^{[N]}-\bar{x}_T^{[N]}\|^2\big)^\frac{1}{2}\\
  \leq & \frac{1}{2}Q_T \Big(2\sup_{t\in[0,T]}E\|\hat{x}_t^{[N]}\|^2+2\sup_{t\in[0,T]}E\|\bar{x}_t^{[N]}\|^2\Big)^\frac{1}{2} \Big(\sup_{t\in[0,T]}E\|\hat{x}_t^{[N]}-\bar{x}_t^{[N]}\|^2\Big)^\frac{1}{2}\\
  & \rightarrow 0,\ as\ N\rightarrow\infty.
\end{align*}
In addition, it follows from Lemmas \ref{lemma5} and \ref{lemma6} that
\begin{align*}
  & \frac{1}{2}E\int_0^T R\Big|\|\hat{u}_t^{[N]}\|^2-\|\bar{u}_t^{[N]}\|^2\Big|dt\\
  = & \frac{R}{2}E\int_0^T \Big(\|\hat{u}_t^{[N]}\|+\|\bar{u}_t^{[N]}\|\Big)\Big|\|\hat{u}_t^{[N]}\|-\|\bar{u}_t^{[N]}\|\Big|dt\\
  \leq & \frac{R}{2}E\int_0^T (\|\hat{u}_t^{[N]}\|+\|\bar{u}_t^{[N]}\|) \|\hat{u}_t^{[N]}-\bar{u}_t^{[N]}\|dt\\
  \leq & \frac{R}{2}\int_0^T \Big[E(\|\hat{u}_t^{[N]}\|+\|\bar{u}_t^{[N]}\|)^2\Big]^\frac{1}{2} \Big(E\|\hat{u}_t^{[N]}-\bar{u}_t^{[N]}\|^2\Big)^\frac{1}{2}dt\\
  \leq & \frac{R}{2} \Big[\sup_{t\in[0,T]}E(\|\hat{u}_t^{[N]}\|+\|\bar{u}_t^{[N]}\|)^2\Big]^\frac{1}{2} \int_0^T \Big(E\|\hat{u}_t^{[N]}-\bar{u}_t^{[N]}\|^2\Big)^\frac{1}{2}dt\\
  \leq & \frac{R}{2}\sqrt{T} \Big[\sup_{t\in[0,T]}E(\|\hat{u}_t^{[N]}\|+\|\bar{u}_t^{[N]}\|)^2\Big]^\frac{1}{2} \Big(\int_0^T E\|\hat{u}_t^{[N]}-\bar{u}_t^{[N]}\|^2dt\Big)^\frac{1}{2}\\
  \leq & R\sqrt{T} \Big(\sup_{t\in[0,T]}E\|\hat{u}_t^{[N]}\|^2+\sup_{t\in[0,T]}E\|\bar{u}_t^{[N]}\|^2\Big)^\frac{1}{2} \Big(\int_0^T E\|\hat{u}_t^{[N]}-\tilde{u}_t\|^2dt+\int_0^T E\|\tilde{u}_t-\bar{u}_t^{[N]}\|^2dt\Big)^\frac{1}{2}\\
  & \rightarrow 0,\ as\ N\rightarrow\infty.
\end{align*}
Therefore, it holds that $  \Big|\frac{1}{N}J_{soc}^N(\mathbf{\hat{u}})-\frac{1}{N}\inf_{\mathbf{u}\in\mathcal{U}_c}J_{soc}^N(\mathbf{u})\Big|=o(1)$.
\end{proof}

Finally, we provide some examples of correlation matrices $Q_N$ and trace-class operators $Q$ satisfying Assumption \ref{assumption2}.
\begin{example}\label{example1}
Consider an $N\times N$ correlation matrix $Q_N=\big[\cos(\pi\frac{i-j}{N})\big]_{ij}$. Let
\begin{equation*}
v_1^{Q_N}=\sqrt{\frac{2}{N}}\Big(\cos\frac{\pi}{N},\cos\frac{2\pi}{N},\cdots,\cos\pi\Big)^T,\qquad v_2^{Q_N}=\sqrt{\frac{2}{N}}\Big(\sin\frac{\pi}{N},\sin\frac{2\pi}{N},\cdots,\sin\pi\Big)^T.
\end{equation*}
Then, one has $(v_1^{Q_N})^T v_1^{Q_N}+(v_2^{Q_N})^T v_2^{Q_N}=2$ and $(v_1^{Q_N})^T v_1^{Q_N}-(v_2^{Q_N})^T v_2^{Q_N}=\frac{2}{N}\sum_{i=1}^N\cos\frac{2\pi i}{N}=0$, which yields $\|v_1^{Q_N}\|=\|v_2^{Q_N}\|=1$. Since $
(v_1^{Q_N})^T v_2^{Q_N}=\frac{1}{N}\sum_{i=1}^N\sin\frac{2\pi i}{N}=0,$
we have that $v_1^{Q_N}$ and $v_2^{Q_N}$ are orthonormal. Since
$
Q_N=\frac{N}{2}\Big[v_1^{Q_N} (v_1^{Q_N})^T+v_2^{Q_N} (v_2^{Q_N})^T\Big],
$
it holds that
$Q_N v_1^{Q_N}=\frac{N}{2}v_1^{Q_N}$ and $ Q_N v_2^{Q_N}=\frac{N}{2}v_2^{Q_N}.$
Then we have $rank\ Q_N=2$, and eigenvalues and eigenvectors of $Q_N$ are $\frac{N}{2}$, $\frac{N}{2}$ and $v_1^{Q_N}$, $v_2^{Q_N}$, respectively. In addition, considering $Q(x,y)=\cos(\pi(x-y))$, $x,y\in[0,1]$, we define the integral operator as below
$$
(Qf)(x)=\int_0^1Q(x,y)f(y)dy,\quad \forall f\in L^2[0,1].
$$
The eigenvalues and eigenfunctions of $Q$ are $\lambda_1^Q=\lambda_2^Q=\frac{1}{2}$, $f_1^Q=\sqrt{2}\cos\pi x$, $f_2^Q=\sqrt{2}\sin\pi x$, which yields that $Q$ is trace-class. Thus
\begin{align}\label{unknown1}
  \Big\|\sqrt{\lambda_1^Q}f_1^Q-\sqrt{\lambda_1^{Q_N}}\sum_{i=1}^Nv_1^{Q_N}(i)\mathds{1}_{P_i}\Big\|_2^2
  =&\Big\|\cos\pi(\cdot)-\sqrt{\frac{N}{2}}\sum_{i=1}^N\mathds{1}_{P_i}(\cdot)\sqrt{\frac{2}{N}}\cos\frac{i\pi}{N}\Big\|_2^2\nonumber\\
  =&\int_0^1\Big|\cos\pi x-\sum_{i=1}^N\mathds{1}_{P_i}(x)\cos\frac{i\pi}{N}\Big|^2dx\nonumber\\
  =&\sum_{i=1}^N\int_{P_i}\big(\cos\pi x-\cos\frac{i\pi}{N}\big)^2dx\nonumber\\
  =&\frac{1}{N}\sum_{i=1}^N \big(\cos\pi \xi_i-\cos\frac{i\pi}{N}\big)^2,\quad \xi_i\in [\frac{i-1}{N},\frac{i}{N}]\nonumber\\
  =&\frac{4}{N}\sum_{i=1}^N\sin^2\frac{\pi}{2}(\xi_i+\frac{i}{N})\sin^2\frac{\pi}{2}(\xi_i-\frac{i}{N})\nonumber\\
  \leq &\frac{4}{N}\sum_{i=1}^N\Big|\frac{\pi}{2}(\xi_i-\frac{i}{N})\Big|^2\leq \frac{\pi^2}{N^2}\rightarrow 0.
\end{align}
Similarly, one has
\begin{equation}\label{unknown2}
  \Big\|\sqrt{\lambda_2^Q}f_2^Q-\sqrt{\lambda_2^{Q_N}}\sum_{i=1}^Nv_2^{Q_N}(i)\mathds{1}_{P_i}\Big\|_2^2\rightarrow 0.
\end{equation}
By (\ref{unknown1}) and (\ref{unknown2}), $Q_N$ and $Q$ considered in this example satisfy Assumption \ref{assumption2}.
\end{example}

\begin{example}
Consider an $N\times N$ double-constant matrix $Q_N$, where the $(i,j)$-th entry is given by:
\begin{equation*}
  {\rho}_{ij}=\left\{
  \begin{split}
    &1,\ & if\ i=j,\\
    &1-\frac{1}{2N},\ & if\ i\neq j.
  \end{split}
  \right.
\end{equation*}
According to \cite{Neill}, we obtain that $rank\ Q_N=N$, the corresponding eigenvalues are $\lambda_1^{Q_N}=1+(N-1)(1-\frac{1}{2N})$, $\lambda_2^{Q_N}=\cdots=\lambda_N^{Q_N}=\frac{1}{2N}$, and $v_1^{Q_N}=\frac{1}{\sqrt{N}}(1,1,\cdots,1)^T$ is an eigenvector with respect to $\lambda_1^{Q_N}$.
Considering $Q(x,y)=1$, the corresponding trace-class operator is defined as the one in Example \ref{example1} with eigenvalue $\lambda^Q=1$ and eigenfunction $f^Q=\mathds{1}(x)$. Thus, one has
\begin{equation}\label{unknown3}
  \Big\|\sqrt{\lambda^Q}f^Q-\sqrt{\lambda_1^{Q_N}}\sum_{i=1}^N v_1^{Q_N}(i) \mathds{1}_{P_i}\Big\|_2^2=\left|1-\sqrt{\frac{1}{N}+\frac{N-1}{N}(1-\frac{1}{2N})}\right|^2 \|\mathds{1}\|_2^2\rightarrow 0,
\end{equation}
and
\begin{equation}\label{unknown4}
  \frac{1}{N}\sum_{j=2}^N\lambda_j^{Q_N}=\frac{N-1}{2N^2}\rightarrow 0.
\end{equation}
By (\ref{unknown3}) and (\ref{unknown4}), $Q_N$ and $Q$ considered in this example satisfy Assumption \ref{assumption2}.

\end{example}

\section{Finite rank graphon case}

\qquad Due to the presence of infinite sum, calculating the decentralized strategies \eqref{constructedcontrol} directly is extremely difficult. In this section, we investigate the finite rank graphon case that makes the calculation of decentralized strategies more tractable. Consider the following finite rank assumption which is also appeared in \cite{Parise,Gao,Dunyak1,Dunyak2,Tchuendom}.

\begin{assumption}\label{finiteeigenvalues}
The graphon operator $M$ corresponding to $M(\cdot,\cdot)$ defined by (\ref{graphonoperator}) admits finite non-zero eigenvalues $\{\lambda_l^M\}_{l=1}^L$ with a set of orthonormal eigenfunctions $\{f_l^M\}_{l=1}^L$.
\end{assumption}

Under the above assumption, we can rewrite the decentralized strategies \eqref{constructedcontrol} as follows
\begin{equation}\label{constructedcontrol2}
  \hat{u}_t^i=-\frac{2B}{R}\Pi_t^\bot\hat{x}_t^i-\frac{2B}{R}N \Big\langle \mathds{1}_{P_i},\sum_{l=1}^L (\bar{\Pi}_t^l-\Pi_t^\bot)\varphi_t^lf_l^M\Big\rangle
\end{equation}
with $\varphi_t^l=\langle \tilde{y}_t,f_l^M\rangle$. However, it is not without its limitations, as \eqref{constructedcontrol2} relies on a continuous modification of the solution of the abstract stochastic evolution equation \eqref{auxiliarystate}. We wish to further reduce the complexity of calculation. It follows from \eqref{auxiliarystate} that for a.s. $\omega$
\begin{equation*}
 \varphi_t^l= \langle \tilde{y}_t,f_l^M\rangle=\langle x_0,f_l^M\rangle+\int_0^t \langle (AI-\frac{2B^2}{R}\Pi(s)+bM)\tilde{y}_s,f_l^M\rangle ds+\sigma\langle W_t^Q,f_l^M\rangle,\quad t\in[0,T].
\end{equation*}
Since
$$
\langle \Pi_t\tilde{y}_t,f_l^M\rangle=\Big\langle \Pi_t^\bot \tilde{y}_t+\sum_{k=1}^L (\bar{\Pi}_t^k-\Pi_t^\bot)\langle f_k^M,\tilde{y}_t\rangle f_k^M,f_l^M\Big\rangle=\bar{\Pi}_t^l\langle \tilde{y}_t,f_l^M\rangle,
$$
$$
\langle M\tilde{y}_t,f_l^M\rangle=\Big\langle \sum_{k=1}^L \lambda_k^M\langle \tilde{y}_t,f_k^M\rangle f_k^M,f_l^M\Big\rangle=\lambda_l^M\langle \tilde{y}_t,f_l^M\rangle,
$$
and
$$
\langle W_t^Q,f_l^M\rangle=\Big\langle \sum_{j=1}^d \sqrt{\lambda_j^Q}W_t^jf_j^Q,f_l^M\Big\rangle=\sum_{j=1}^d \sqrt{\lambda_j^Q}\langle f_j^Q,f_l^M\rangle W_t^j,
$$
we have that $\varphi^l$ satisfies for a.s.
\begin{equation}\label{varphil}
  \varphi_t^l=\langle x_0,f_l^M\rangle+\int_0^t (A-\frac{2B^2}{R}\bar{\Pi}_s^l+b\lambda_l^M)\varphi_s^l ds+\sigma \sum_{j=1}^d \sqrt{\lambda_j^Q}\langle f_j^Q,f_l^M\rangle W_t^j, \quad t\in[0,T],
\end{equation}
which yields that $\varphi^l$ is the unique solution of SDE \eqref{varphil}. Thus, we can propose the following decentralized strategies for $i$th agent:
\begin{equation}\label{constructedcontrol3}
  \hat{u}_t^i=-\frac{2B}{R}\Pi_t^\bot\hat{x}_t^i-\frac{2B}{R}N \Big\langle \mathds{1}_{P_i},\sum_{l=1}^L (\bar{\Pi}_t^l-\Pi_t^\bot)\breve{\varphi}_t^lf_l^M\Big\rangle,
\end{equation}
where $\breve{\varphi}^l$ is any solution of SDE \eqref{varphil}, which satisfies $\mathbb{P}(\{\varphi_t^l=\breve{\varphi}_t^l,t\in[0,T]\})=1$. By a slight abuse of notation, we still use the notation $\hat{u}_t^i$ in \eqref{constructedcontrol3}. It is noteworthy that, compared to \eqref{constructedcontrol}, the strategies \eqref{constructedcontrol3} is relatively easier to calculate. In addition, it is obviously that \eqref{constructedcontrol3} has asymptotic social optimality.

\section{Conclusions and future work}
\qquad In this paper, we study social optimality for a class of large population linear quadratic systems on large-scale graphs. By transforming the social control problem into an infinite dimensional stochastic optimal control problem, we obtain the centralized feedback type centralized control. In addition, we design a set of decentralized strategies in a privacy-preserving pattern, and then show their asymptotical social optimality. It is noteworthy that, in this work, the "decentralized" refers specifically to privacy preservation. The analysis of traditional decentralized strategies based on our models is much more complicated and beyond the scope of our research, presenting an avenue worthy of further investigation.

\section*{Acknowledgement}
\qquad The authors would like to thank Prof. Qi L\"{u} and Dr. Fubo Li as well as Ms. Yue Zeng for their helpful suggestions and discussions.

\end{document}